\newtheorem{definition}{Definition}
\newtheorem{thm}{Theorem}
\newtheorem{coro}{Corollary}
\newtheorem{assum}{Assumption}
\newcommand*\bigcdot{\mathpalette\bigcdot@{.5}}
\newcommand*\bigcdot@[2]{\mathbin{\vcenter{\hbox{\scalebox{#2}{$\m@th#1\bullet$}}}}}
\newcommand{\xb}{\mathbf{x}}
\newcommand{\yb}{\mathbf{y}}
\newcommand{\zb}{\mathbf{z}}
\newcommand{\wb}{\mathbf{w}}
\newcommand{\ub}{\mathbf{u}}
\newcommand{\RR}{\mathds{R}}
\newcommand{\dist}{\mathrm{dist}}
\newcommand{\zero}{\mathbf{0}}
\newcommand{\argmin}{\mathop{\mathrm{argmin}}}
\newcommand{\KL}{K{\L}~}
\newcommand{\ttop}{{\!\top}}
\newcommand{\inner}[2]{\langle #1, #2 \rangle}
\title{Convergence of Cubic Regularization for Nonconvex Optimization under \KL Property}
\author{
  Yi Zhou \\
  Department of ECE\\
  The Ohio State University\\
  \texttt{zhou.1172@osu.edu} \\
   \And
   Zhe Wang \\
   Department of ECE\\
   The Ohio State University\\
   \texttt{wang.10982@osu.edu} \\
   \AND
   Yingbin Liang \\
   Department of ECE\\
   The Ohio State University\\
   \texttt{liang.889@osu.edu} \\
}
\begin{document}

\maketitle

\begin{abstract}
  Cubic-regularized Newton's method (CR) is a popular algorithm that guarantees to produce a second-order stationary solution  for solving nonconvex optimization problems. However, existing understandings of the convergence rate of CR are conditioned on special types of geometrical properties of the objective function. In this paper, we explore the asymptotic convergence rate of CR by exploiting the ubiquitous Kurdyka-{\L}ojasiewicz (\KL\!\!) property of nonconvex objective functions. In specific, we characterize the asymptotic convergence rate of various types of optimality measures for CR including function value gap, variable distance gap, gradient norm and least eigenvalue of the Hessian matrix. Our results fully characterize the diverse convergence behaviors of these optimality measures in the full parameter regime of the \KL property. Moreover, we show that the obtained asymptotic convergence rates of CR are order-wise faster than those of first-order gradient descent algorithms under the \KL property.
\end{abstract}

\section{Introduction}
A majority of machine learning applications are naturally formulated as nonconvex optimization due to the complex mechanism of the underlying model. Typical examples include training neural networks in deep learning \cite{Goodfellow2016}, low-rank matrix factorization \cite{RongGe2016,Bhojanapalli2016}, phase retrieval \cite{Candes2015,Zhang2017}, etc. In particular, these machine learning problems take the generic form
\begin{align*}
\min_{\xb\in \RR^d}~~f(\xb), \tag{P}
\end{align*}
where the objective function $f: \RR^d \to \RR$ is a differentiable and nonconvex function, and usually corresponds to the total loss over a set of data samples. 
Traditional first-order algorithms such as gradient descent can produce a solution $\bar{\xb}$ that satisfies the first-order stationary condition, i.e., $\nabla f (\bar{\xb}) = \zero$. However, such first-order stationary condition does not exclude the possibility of approaching a saddle point, which can be a highly suboptimal solution and therefore deteriorate the performance. 

Recently in the machine learning community, there has been an emerging interest in designing algorithms to scape saddle points in nonconvex optimization, and one popular algorithm is the cubic-regularized Newton's algorithm \cite{Nesterov2006,Agarwal2017,Yue2018}, which is also referred to as cubic regularization (CR) for simplicity. Such a second-order method exploits the Hessian information and produces a solution $\bar{\xb}$ for problem (P) that satisfies the second-order stationary condition, i.e., 
$$(\text{second-order stationary}):\quad \nabla f (\bar{\xb}) = \zero, ~\nabla^2 f(\bar{\xb}) \succeq \zero.$$
The second-order stationary condition ensures CR to escape saddle points wherever the corresponding Hessian matrix has a negative eigenvalue (i.e., strict saddle points \cite{Sun2015}). In particular, many nonconvex machine learning problems have been shown to exclude spurious local minima and have only strict saddle points other than global minima \cite{Baldi1989,Sun2017,RongGe2016}. In such a desirable case, CR is guaranteed to find the global minimum for these nonconvex problems. 
To be specific, given a proper parameter $M>0$, CR generates a variable sequence $\{ \xb_{k}\}_k$ via the following update rule
\begin{align}
	\text{(CR)}~ \xb_{k+1} \in \argmin_{\yb \in \RR^d} ~\inner{\yb - \xb_{k}}{\nabla f (\xb_{k})} + \frac{1}{2} (\yb - \xb_{k})^\ttop \nabla^2 f(\xb_{k})  (\yb - \xb_{k}) + \frac{M}{6} \|\yb - \xb_{k}\|^3. \label{eq: CR}
\end{align}
Intuitively, CR updates the variable by minimizing an approximation of the objective function at the current iterate. This approximation is essentially the third-order Taylor's expansion of the objective function. We note that computationally efficient solver has been proposed for solving the above cubic subproblem \cite{Agarwal2017}, and it was shown that the resulting computation complexity of CR to achieve a second-order stationary point serves as the state-of-the-art among existing algorithms that achieve the second-order stationary condition.

Several studies have explored the convergence of CR for nonconvex optimization. Specifically, the pioneering work \cite{Nesterov2006} first proposed the CR algorithm and showed that the variable sequence $\{\xb_{k} \}_k$ generated by CR has second-order stationary limit points and converges sub-linearly. Furthermore, the gradient norm along the iterates was shown to converge quadratically (i.e., super-linearly) given an initialization with positive definite Hessian. Moreover, the function value along the iterates was shown to converge super-linearly under a certain gradient dominance condition of the objective function. Recently, \cite{Yue2018} studied the asymptotic convergence rate of CR under the local error bound condition, and established the quadratic convergence of the distance between the variable and the solution set. 
Clearly, these results demonstrate that special geometrical properties (such as the gradient dominance condition and the local error bound) enable much faster convergence rate of CR towards a second-order stationary point. However, these special conditions may fail to hold for generic complex objectives involved in practical applications. Thus, it is desired to further understand the convergence behaviors of CR for optimizing nonconvex functions over a broader spectrum of geometrical conditions.

The Kurdyka-{\L}ojasiewicz (K{\L}) property (see \Cref{sec: pre} for details) \cite{Bolte2007,Bolte2014} serves as such a candidate. As described in \Cref{sec: pre}, the \KL property captures a broad spectrum of the local geometries that a nonconvex function can have and is parameterized by a parameter $\theta$ that changes over its allowable range.  In fact, the \KL property has been shown to hold ubiquitously for most practical functions (see \Cref{sec: pre} for a list of examples). 
The \KL property has been exploited extensively to analyze the convergence rate of various {\em first-order} algorithms for nonconvex optimization, e.g., gradient method \cite{Attouch2009,Li2017}, alternating minimization \cite{Bolte2014} and distributed gradient methods \cite{Zhou2016}.
But it has not been exploited to establish the {\em convergence rate} of second-order algorithms towards second-order stationary points.  
In this paper, we exploit the \KL property of the objective function to provide a comprehensive study of the convergence rate of CR for nonconvex optimization. We anticipate our study to substantially advance the existing understanding of the convergence of CR to a much broader range of nonconvex functions. We summarize our contributions as follows.
\vspace{-5pt}
\subsection{Our Contributions}
\vspace{-5pt}
We characterize the convergence rate of CR locally towards second-stationary points for nonconvex optimization under the \KL property of the objective function. As the first work that establishes the convergence rate under the \KL property for a second-order algorithm, our results also compare and contrast the different order-level of guarantees that \KL yields between the first and second-order algorithms.
Specifically, we establish the convergence rate of CR in terms of the following optimality measures.

\textbf{Gradient norm \& least eigenvalue of Hessian:}
We characterize the asymptotic convergence rates of the sequence of gradient norm $\{\|\nabla f(\xb_{k})\| \}_k$ and the sequence of the least eigenvalue of Hessian $\{\lambda_{\min} (\nabla^2 f(\xb_{k}))\}_k$ generated by CR. We show that CR meets the second-order stationary condition at a (super)-linear rate in the parameter range $\theta \in [\frac{1}{3}, 1]$ for the \KL property, while the convergence rates become sub-linearly in the parameter range $\theta \in (0, \frac{1}{3})$ for the \KL property.

\textbf{Function value gap \& variable distance:} 
We characterize the asymptotic convergence rates of the function value sequence $\{f(\xb_{k})\}_k$ and the variable sequence $\{\xb_{k}\}_k$ to the function value and the variable of a second-order stationary point, respectively. The obtained convergence rates range from sub-linearly to super-linearly depending on the parameter $\theta$ associated with the \KL property. Our convergence results generalize the existing ones established in \cite{Nesterov2006} corresponding to special cases of $\theta$ to the full range of $\theta$. Furthermore, these types of convergence rates for CR are orderwise faster than the corresponding convergence rates of the first-order gradient descent algorithm in various regimes of the \KL parameter $\theta$ (see \Cref{table: 1} and \Cref{table: 2}  for the comparisons). 

Based on the \KL property, we establish a generalized local error bound condition (referred to as the \KL error bound). The \KL error bound further leads to characterization of the convergence rate of distance between the variable and the solution set, which ranges from sub-linearly to super-linearly depending on the parameter $\theta$ associated with the \KL property.
This result generalizes the existing one established in \cite{Yue2018} corresponding to a special case of $\theta$ to the full range of $\theta$. We also point out the applicability of our results to CR with inexactness in the algorithm computation.

\vspace{-5pt}
\subsection{Related Work}
\vspace{-5pt}
\textbf{Cubic regularization:} CR algorithm was first proposed in \cite{Nesterov2006}, in which the authors analyzed the convergence of CR to second-order stationary points for nonconvex optimization. In \cite{Nesterov2008}, the authors established the sub-linear convergence of CR for solving convex smooth problems, and they further proposed an accelerated version of CR with improved sub-linear convergence. Recently, \cite{Yue2018} studied the asymptotic convergence properties of CR under the error bound condition, and established the quadratic convergence of the iterates. Several other works proposed different methods to solve the cubic subproblem of CR, e.g., \cite{Agarwal2017,Carmon2016,Cartis2011a}.

\textbf{Inexact-cubic regularization:} 
Another line of work aimed at improving the computation efficiency of CR by solving the cubic subproblem with inexact gradient and Hessian information. In particular, \cite{Saeed2017,Jin2017} proposed an inexact CR for solving convex problem and nonconvex problems, respectively. Also, \cite{Cartis2011b} proposed an adaptive inexact CR for nonconvex optimization, whereas \cite{Jiang2017} further studied the accelerated version for convex optimization. Several studies explored subsampling schemes to implement inexact CR algorithms, e.g., \cite{kohler2017,Xu2017,Zhou2018,Wang2018}.

\textbf{\KL property:} The \KL property was first established in \cite{Bolte2007}, and was then widely applied to characterize the asymptotic convergence behavior for various first-order algorithms for nonconvex optimization \cite{Attouch2009,Li2017,Bolte2014,Zhou2016,Zhou_2017a}. The \KL property was also exploited to study the convergence of second-order algorithms such as generalized Newton's method \cite{Frankel2015} and the trust region method \cite{Noll2013}. However, these studies did not characterize the convergence rate and the studied methods cannot guarantee to converge to second-order stationary points, whereas this paper provides this type of results.

\textbf{First-order algorithms that escape saddle points:} Many first-order algorithms are proved to achieve second-order stationary points for nonconvex optimization. For example, online stochastic gradient descent \cite{Ge2015}, perturbed gradient descent \cite{Jin2017}, gradient descent with negative curvature \cite{Carmon2016,Liu2017} and other stochastic algorithms \cite{Zhu2017}.

\section{Preliminaries on \KL Property and CR Algorithm}\label{sec: pre}
Throughout the paper, we make the following standard assumptions on the objective function $f: \RR^d \to \RR$ \cite{Nesterov2006,Yue2018}.
\begin{assum}\label{assum: f}
	The objective function $f$ in the problem $\mathrm{(P)}$ satisfies:
	\begin{enumerate}[leftmargin=*,topsep=0pt,noitemsep]
		\item Function $f$ is continuously twice-differentiable and bounded below, i.e.,  $\inf_{\xb\in \RR^d} f(\xb) > -\infty$;
		\item For any $\alpha \in \RR$, the sub-level set $\{\xb: f(\xb) \le \alpha\}$ is compact; 
		\item The Hessian of $f$ is $L$-Lipschitz continuous on a compact set $\mathcal{C}$, i.e., 
		\begin{align*}
			\|\nabla^2 f(\xb) - \nabla^2f (\yb)\| \le L \|\xb - \yb\|, \quad \xb, \yb \in \mathcal{C}.
		\end{align*}
	\end{enumerate}
\end{assum} 


The above assumptions make problem (P) have a solution and make the iterative rule of CR being well defined. Besides these assumptions, many practical functions are shown to satisfy the so-called {\L}ojasiewicz gradient inequality \cite{Lojas1965}. Recently, such a condition was further generalized to the Kurdyka-{\L}ojasiewicz property \cite{Bolte2007,Bolte2014}, which is satisfied by a larger class of nonconvex functions. 

Next, we introduce the \KL property of a function $f$.
Throughout, the (limiting) subdifferential of a proper and lower-semicontinuous function $f$ is denoted as $\partial f$, and the point-to-set distance is denoted as $\dist_\Omega(\xb) := \inf_{\wb\in \Omega} \|\xb - \wb\|$.
 
\begin{definition}[\KL property, \cite{Bolte2014}]\label{def: KL}
	A proper and lower-semicontinuous function $f$ is said to satisfy the \KL property if for every compact set $\Omega\subset \mathrm{dom}f$ on which $f$ takes a constant value $f_\Omega \in \RR$, there exist $\varepsilon, \lambda >0$ such that for all $\bar{\xb} \in \Omega$ and all $\xb\in \{\zb\in \RR^d : \dist_\Omega(\zb)<\varepsilon, f_\Omega < f(\zb) <f_\Omega + \lambda\}$, one has
	\begin{align}\label{eq: KL}
	\varphi' \left(f(\xb) - f_\Omega\right) \cdot \dist_{\partial f(\xb)}(\zero) \ge 1,
	\end{align}
	where function $\varphi: [0,\lambda) \to \RR_+$ takes the form $\varphi(t) = \frac{c}{\theta} t^\theta$ for some constants $c>0, \theta\in (0,1]$.
\end{definition}
The \KL property establishes local geometry of the nonconvex function around a compact set. In particular, consider a differentiable function (which is our interest here) so that $\partial f = \nabla f$. Then, the local geometry described in \cref{eq: KL} can be rewritten as 
\begin{align}
	f(\xb) - f_\Omega \le C \|\nabla f(\xb)\|^{\frac{1}{1-\theta}} \label{eq: KLsimple}
\end{align}
for some constant $C>0$. \Cref{eq: KLsimple} can be viewed as a generalization of the well known gradient dominance condition \cite{Lojasiewicz1963,Karimi2016}, which corresponds to the special case of $\theta = \frac{1}{2}$ and is satisfied by various nonconvex machine learning models \cite{Zhou2016b,Yue2018,Zhou2017}. In general, the parameter $\theta \in (0,1]$ of the \KL property captures the local curvature of the function, and we show in our main results that it determines the asymptotic convergence rate of CR.

The \KL property has been shown to hold for a large class of functions including sub-analytic functions, logarithm and exponential functions and semi-algebraic functions. These function classes cover most of nonconvex objective functions encountered in practical applications. We provide a partial list of \KL functions below and refer the reader to \cite{Bolte2014,Attouch2010} for a comprehensive list.
\begin{itemize}[leftmargin=*,topsep=0pt,noitemsep]
	\item Real polynomial functions;
	\item Vector (semi)-norms $\|\cdot\|_p$ with $p\ge 0$ be any rational number;
	\item Matrix (semi)-norms, e.g., operator norm, nuclear norm, Frobenious norm, rank, etc;
	\item Logarithm functions, exponentiation functions;
\end{itemize}

Next, we provide some fundamental understandings of CR that determines the convergence rate. The algorithmic dynamics of CR \cite{Nesterov2006} is very different from that of first-order gradient descent algorithm, which implies that the convergence rate of CR under the \KL property can be very different from the existing result for the first-order algorithms under the \KL property. We provide a detailed comparison between the two algorithmic dynamics in \Cref{table: 1} for illustration, where $L_{\textrm{grad}}$ corresponds to the Lipschitz parameter of $\nabla f$ for gradient descent, $L$ is the Lipschitz parameter for Hessian  and we choose $M=L$ for CR for simple illustration.

\begin{table}[ht]
	\caption{Comparison between dynamics of gradient descent and CR.}\label{table: 1}
	\center
	\begin{tabular}{cccc}
		\toprule
		{} & {} & {gradient descent} & {cubic-regularization $(M=L)$} \\ \midrule
		$f(\xb_{k}) - f(\xb_{k-1})$  & $\le$ & $-\tfrac{L_{\textrm{grad}}}{2}\|\xb_{k} - \xb_{k-1}\|^2$ & $- \frac{L}{12} \|\xb_{k} - \xb_{k-1}\|^3$  \\
		\midrule
		$\|\nabla f(\xb_{k})\|$  & $\le$   & $L_{\textrm{grad}}\|\xb_{k} - \xb_{k-1}\|$ & $ L\|\xb_{k} - \xb_{k-1}\|^2$   \\
		\midrule
		$-\lambda_{\min}(\nabla^2 f(\xb_{k}))$  & $\le$  & N/A & $\tfrac{3L }{2} \|\xb_{k} - \xb_{k-1}\|$  \\
		\bottomrule
	\end{tabular}
\end{table}
It can be seen from \Cref{table: 1} that the dynamics of gradient descent involves information up to the first order (i.e., function value and gradient), whereas the dynamics of CR involves the additional second order information, i.e., least eigenvalue of Hessian (last line in \Cref{table: 1}).
In particular, note that the successive difference of function value $f(\xb_{k+1}) - f(\xb_{k})$ and the gradient norm $\|\nabla f(\xb_{k})\|$ of CR are bounded by higher order terms of $\|\xb_{k} - \xb_{k-1}\|$ compared to those of gradient descent. Intuitively, this implies that CR should converge faster than gradient descent in the converging phase when $\xb_{k+1} - \xb_{k} \to \zero$. Next, we exploit the dynamics of CR and the \KL property  to study its asymptotic convergence rate.

\textbf{Notation:} Throughout the paper, we denote $f(n) = \Theta(g(n))$ if and only if for some $0<c_1<c_2$, $c_1 g(n) \le f(n) \le c_2 g(n)$ for all $n\ge n_0$.

\section{Convergence Rate of CR to Second-order Stationary Condition}
In this subsection, we explore the convergence rates of the gradient norm and the least eigenvalue of the Hessian along the iterates generated by CR under the \KL property.
Define the second-order stationary gap
\begin{align*}
\mu(\xb) := \max \bigg\{\sqrt{\frac{2}{L+M} \|\nabla f(\xb)\|},~ -\frac{2}{2L+M} \lambda_{\min}(\nabla^2 f(\xb)) \bigg\}.
\end{align*}
The above quantity is well established as a criterion for achieving second-order stationary \cite{Nesterov2006}. 
It tracks both the gradient norm and the least eigenvalue of the Hessian at $\xb$. In particular, the second-order stationary condition is satisfied as $\mu(\xb) = 0$. 

Next, we characterize the convergence rate of $\mu$ for CR under the \KL property.
\begin{restatable}{thm}{thmmu}\label{coro: mu}
	Let \Cref{assum: f} hold and assume that problem $\mathrm{(P)}$ satisfies the \KL property associated with parameter $\theta \in (0,1]$. Then, there exists a sufficiently large $k_0\in \mathds{N}$ such that for all $k\ge k_0$ the sequence $\{\mu(\xb_{k})\}_k$ generated by CR satisfies
	\begin{enumerate}[leftmargin=*,topsep=0pt,noitemsep]
		\item If $\theta = 1$, then $\mu(\xb_{k}) \to 0$ within finite number of iterations; 
		\item If $\theta \in (\frac{1}{3}, 1)$, then $\mu(\xb_{k})\to 0$ super-linearly as 
		$\mu(\xb_{k}) \le \Theta \Big( \exp \Big(-\big(\frac{2\theta}{1-\theta}\big)^{k-k_0}\Big) \Big);$
		\item If $\theta = \frac{1}{3}$, then $\mu(\xb_{k})\to 0$ linearly as 
		$\mu(\xb_{k}) \le \Theta \Big( \exp \big(-(k-k_0)\big) \Big);$
		\item If $\theta \in (0, \frac{1}{3})$, then $\mu(\xb_{k}) \to 0$ sub-linearly as 
		$\mu(\xb_{k}) \le \Theta \Big((k-k_0)^{-\frac{2\theta}{1-3\theta}}\Big).$
	\end{enumerate}
\end{restatable}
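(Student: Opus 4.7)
The plan is to reduce the claim about $\mu(\xb_k)$ to an asymptotic rate on the single-step displacement $d_k:=\|\xb_k-\xb_{k-1}\|$, and then to invoke the \KL-based rate on $d_k$ that is the technical core of the paper. For the reduction, I use the two dynamics identities in the second and third rows of Table~\ref{table: 1} (both direct consequences of the optimality conditions of the cubic subproblem \eqref{eq: CR} and the $L$-Lipschitz continuity of $\nabla^2 f$): $\|\nabla f(\xb_k)\|\le L\, d_k^{2}$ and $-\lambda_{\min}(\nabla^2 f(\xb_k))\le\tfrac{2L+M}{2}\,d_k$. Substituting both into the definition of $\mu$ yields
\[
\mu(\xb_k)\le C(L,M)\, d_k,
\]
so any asymptotic rate established on $d_k$ transfers verbatim to $\mu(\xb_k)$.

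The next step is to derive a one-step recursion on $d_k$ by combining the cubic descent $f(\xb_{k-1})-f(\xb_k)\ge\tfrac{L}{12}d_k^{3}$, the gradient bound above, and the \KL inequality $f(\xb)-f^*\le C'\|\nabla f(\xb)\|^{1/(1-\theta)}$. Applying KL at $\xb_{k-1}$ together with the gradient bound yields $f(\xb_{k-1})-f^*\le C''\, d_{k-1}^{2/(1-\theta)}$, which combined with the cubic descent produces a recursion $d_k\le\widetilde{C}\,d_{k-1}^{q(\theta)}$ whose exponent $q(\theta)$ crosses $1$ precisely at $\theta=1/3$; this is the structural reason for the four regimes in the statement. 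In each regime the recursion is solved by standard techniques: for $\theta=1$, the KL inequality degenerates to $f(\xb_k)-f^*\le C'\|\nabla f(\xb_k)\|$, and the cubic descent exhausts a strictly positive function-value budget in finitely many iterations; for $\theta\in(1/3,1)$, iterating $q(\theta)>1$ produces $d_k\lesssim d_{k_0}^{q(\theta)^{k-k_0}}$, whose logarithm decays double-exponentially at base $q(\theta)$; for $\theta=1/3$ the recursion is a strict contraction giving geometric decay; and for $\theta\in(0,1/3)$, where $q(\theta)<1$ and direct iteration fails, I would compare the discrete decrease with the ODE $-u'=u^{\beta}$ for an appropriate $\beta>1$ read off from $q(\theta)$, obtaining the stated polynomial rate.

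The main obstacle is forcing the exponent $q(\theta)$ to equal exactly $\frac{2\theta}{1-\theta}$ in the super-linear regime and exactly $\frac{2\theta}{1-3\theta}$ in the sub-linear regime, rather than the weaker exponents produced by the naive chaining of descent, KL, and the gradient bound sketched above. Achieving the sharp announced rates requires working not with $r_k=f(\xb_k)-f^*$ directly but with the transformed sequence $\varphi(r_k)=\tfrac{c}{\theta}r_k^{\theta}$ and using the Attouch--Bolte--Svaiter concave-telescoping estimate $\varphi(r_{k-1})-\varphi(r_k)\ge(r_{k-1}-r_k)/\|\nabla f(\xb_{k-1})\|$, which effectively lifts the KL exponent $\theta$ into the order of the recursion on $d_k$. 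Once the sharpened recursion on $d_k$ is in hand, the corollary follows from the reduction in Step~1 by elementary algebra.
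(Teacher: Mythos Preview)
Your reduction $\mu(\xb_k)\le C\,d_k$ with $d_k=\|\xb_k-\xb_{k-1}\|$ is correct and is exactly the paper's first step. The gap is in the recursion on $d_k$. Your chain (cubic descent, K\L, gradient bound) gives $d_k^3\le C\,r_{k-1}\le C\,d_{k-1}^{2/(1-\theta)}$, hence $q(\theta)=\tfrac{2}{3(1-\theta)}$, not the announced super-linear base $\tfrac{2\theta}{1-\theta}$ (e.g.\ at $\theta=\tfrac12$ you obtain $4/3$ instead of $2$). You correctly flag this as the obstacle, but your proposed fix via the concave telescoping $\varphi(r_{k-1})-\varphi(r_k)\ge (r_{k-1}-r_k)/\|\nabla f(\xb_{k-1})\|$ does \emph{not} improve the exponent: combining it with $r_{k-1}-r_k\ge C d_k^3$ and $\|\nabla f(\xb_{k-1})\|\le C d_{k-1}^2$ yields $d_k^3\le C d_{k-1}^2\,\varphi(r_{k-1})$, and since $\varphi(r_{k-1})\le C r_{k-1}^\theta\le C d_{k-1}^{2\theta/(1-\theta)}$ you are back to $d_k^3\le C d_{k-1}^{2/(1-\theta)}$, i.e.\ the same $q(\theta)$. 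The telescoping device drives the finite-length argument and the rate on $\|\xb_k-\bar\xb\|$, but it cannot manufacture the sharper one-step $d_k$-recursion.

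The paper gets the base $\tfrac{2\theta}{1-\theta}$ by a different mechanism: it does not iterate on $d_k$ at all but passes through the point-to-set distance $\dist_\Omega(\xb_k)$. The two ingredients you are missing are (i) Lemma~1 of \cite{Yue2018}, which gives $d_{k+1}\le C\,\dist_\Omega(\xb_k)$, and (ii) the K\L\ error bound of \Cref{prop: 1}, $\dist_\Omega(\xb)\le\kappa\,\|\nabla f(\xb)\|^{\theta/(1-\theta)}$. Chaining (ii), the gradient bound from \Cref{table: 1}, and (i) produces
\[
\dist_\Omega(\xb_{k+1})\;\le\; C\,\|\nabla f(\xb_{k+1})\|^{\frac{\theta}{1-\theta}}\;\le\; C\,d_{k+1}^{\frac{2\theta}{1-\theta}}\;\le\; C\,\dist_\Omega(\xb_k)^{\frac{2\theta}{1-\theta}},
\]
which is precisely the recursion with the stated exponent; the claim on $\mu$ then follows from $\mu(\xb_k)\le d_k\le C\,\dist_\Omega(\xb_{k-1})$ (this is \Cref{thm: dist}). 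For $\theta\in(0,\tfrac13]$ the paper simply bounds $\dist_\Omega(\xb_k)\le\|\xb_k-\bar\xb\|$ and imports the rates from \Cref{thm: converge_ite}. So your outline would yield super-linear convergence of $\mu$ for $\theta>\tfrac13$, but not with the specific base $\tfrac{2\theta}{1-\theta}$ asserted in the statement; for that you need the Yue step bound and the K\L\ error bound, neither of which appears in your plan.
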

\Cref{coro: mu} provides a full characterization of the convergence rate of $\mu$ for CR to meet the second-order stationary condition under the \KL property. It can be seen from \Cref{coro: mu} that the convergence rate of $\mu$ is determined by the \KL parameter $\theta$. Intuitively, in the regime $\theta\in (0,\frac{1}{3}]$ where the local geometry is `flat', CR achieves second-order stationary slowly as (sub)-linearly. As a comparison, in the regime $\theta \in (\frac{1}{3}, 1]$ where the local geometry is `sharp', CR achieves second-order stationary fast as super-linearly.

We next compare the convergence results of $\mu$ in \Cref{coro: mu} with that of $\mu$ for CR studied in \cite{Nesterov2006}. To be specific, the following two results are established in \cite[Theorems 1 \& 3]{Nesterov2006} under \Cref{assum: f}.
\begin{enumerate}[topsep=0pt]
	\item $\lim_{k\to \infty} \mu(\xb_{k}) = 0, \quad \min_{1\le t \le k} \mu(\xb_t) \le \Theta (k^{-\frac{1}{3}})$;
	\item If the Hessian is positive definite at certain $\xb_{t}$, then the Hessian remains to be positive definite at all subsequent iterates $\{\xb_{k}\}_{k\ge t}$ and $\{\mu(\xb_{k})\}_k$ converges to zero quadratically.
\end{enumerate}
Item 1 establishes a best-case bound for $\mu(\xb_{k})$, i.e., it holds only for the minimum $\mu$ along the iteration path. As a comparison, our results in \Cref{coro: mu} characterize the convergence rate of $\mu(\xb_{k})$ along the entire asymptotic iteration path. Also, the quadratic convergence in item 2 relies on the fact that CR eventually stays in a locally strong convex region (with \KL parameter $\theta = \frac{1}{2}$), and this is consistent with our convergence rate in \Cref{coro: mu} for \KL parameter $\theta = \frac{1}{2}$. In summary, our result captures the effect of the underlying geometry (parameterized by the \KL parameter $\theta$) on the convergence rate of CR towards second-order stationary.



\section{Other Convergence Results of CR}
In this section, we first present the convergence rate of function value and variable distance for CR under the \KL property.
Then, we discuss that such convergence results are also applicable to characterize the convergence rate of inexact CR.
\vspace{-5pt}
\subsection{Convergence Rate of Function Value for CR}
\vspace{-5pt}

It has been proved in \cite[Theorem 2]{Nesterov2006} that the function value sequence $\{f(\xb_{k})\}_k$ generated by CR decreases to a finite limit $\bar{f}$, which corresponds to the function value evaluated at a certain second-order stationary point. The corresponding convergence rate has also been developed in \cite{Nesterov2006} under certain gradient dominance condition of the objective function. 
In this section, we characterize the convergence rate of $\{f(\xb_{k}) \}_k$ to $\bar{f}$ for CR by exploiting the more general \KL property. We obtain the following result.
\begin{restatable}{thm}{thmfunc}\label{thm: 2}
	Let \Cref{assum: f} hold and assume problem $\mathrm{(P)}$ satisfies the \KL property associated with parameter $\theta \in (0,1]$. Then, there exists a sufficiently large $k_0\in \mathds{N}$ such that for all $k\ge k_0$ the sequence $\{f(\xb_k) \}_k$ generated by CR satisfies
	\begin{enumerate}[leftmargin=*,topsep=0pt,noitemsep]
		\item If $\theta = 1$, then $f(\xb_{k})\downarrow \bar{f}$ within finite number of iterations;
		\item If $\theta \in (\frac{1}{3}, 1)$, then $f(\xb_{k})\downarrow \bar{f}$ super-linearly as 
		$f(\xb_{k+1}) - \bar{f} \le \Theta \Big( \exp \Big(-\big(\frac{2}{3(1-\theta)}\big)^{k-k_0}\Big) \Big);$
		\item If $\theta = \frac{1}{3}$, then $f(\xb_{k})\downarrow \bar{f}$ linearly as 
		$f(\xb_{k+1}) - \bar{f} \le \Theta \Big( \exp \big(-(k-k_0)\big) \Big);$
		\item If $\theta \in (0, \frac{1}{3})$, then $f(\xb_{k})\downarrow \bar{f}$ sub-linearly as 
		$f(\xb_{k+1}) - \bar{f} \le \Theta \Big((k-k_0)^{-\frac{2}{1-3\theta}}\Big).$
	\end{enumerate}
\end{restatable}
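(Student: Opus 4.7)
The plan is to combine the two one-step estimates displayed for CR in \Cref{table: 1} with the \KL inequality in the form \cref{eq: KLsimple}, derive a single scalar recursion for the function value gap, and then read off each regime of $\theta$ from that recursion. Write $r_k := f(\xb_k) - \bar{f}$, which is nonnegative and monotonically nonincreasing by the sufficient decrease property.

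First I would show that, once $k$ is large enough, the iterates $\xb_k$ live in a compact set on which $f$ takes the constant value $\bar{f}$ at its limiting second-order stationary points and on which the \KL property applies with exponent $\theta$. This uses Assumption~1 (compact sublevel sets plus $\inf f > -\infty$), the summability of $\|\xb_{k+1}-\xb_k\|$ implicit in the sufficient decrease $r_k - r_{k+1} \ge \tfrac{L}{12}\|\xb_{k+1}-\xb_k\|^3$, and the established fact (from \cite{Nesterov2006}) that $\{f(\xb_k)\}_k\downarrow \bar{f}$ at a second-order stationary point. This is the one delicate step: one must combine the \KL uniformization lemma with the cluster-point analysis of CR so that the single inequality \cref{eq: KLsimple} is valid at every $\xb_{k+1}$ with $k\ge k_0$.

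Once localization is in place, the derivation is mechanical. Combining $r_k - r_{k+1}\ge \tfrac{L}{12}\|\xb_{k+1}-\xb_k\|^3$ with the gradient bound $\|\nabla f(\xb_{k+1})\|\le L\|\xb_{k+1}-\xb_k\|^2$ eliminates the step size and gives
\begin{align*}
r_k - r_{k+1} \;\ge\; C_1 \|\nabla f(\xb_{k+1})\|^{3/2}
\end{align*}
for an explicit constant $C_1$ depending only on $L$. The \KL inequality \cref{eq: KLsimple} applied at $\xb_{k+1}$ yields $\|\nabla f(\xb_{k+1})\| \ge C_2\, r_{k+1}^{1-\theta}$, so substituting produces the master recursion
\begin{align*}
r_k - r_{k+1} \;\ge\; C_3\, r_{k+1}^{\alpha}, \qquad \alpha := \tfrac{3(1-\theta)}{2}.
\end{align*}

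All four cases of the theorem fall out of this recursion. When $\theta = 1$ we have $\alpha = 0$, so $r_k - r_{k+1}\ge C_3$ as long as $r_{k+1}>0$, forcing $r_k = 0$ in finitely many steps. When $\theta\in(\tfrac{1}{3},1)$, $\alpha\in(0,1)$, and rearranging gives $r_{k+1}\le C_4\, r_k^{1/\alpha}$ with exponent $1/\alpha = \tfrac{2}{3(1-\theta)}>1$; iterating this inequality produces the doubly-exponential bound $r_{k+1}\le \Theta\!\left(\exp\!\left(-\bigl(\tfrac{2}{3(1-\theta)}\bigr)^{k-k_0}\right)\right)$ after folding the geometric series of constants into the leading factor. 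When $\theta = \tfrac{1}{3}$, $\alpha = 1$, and the recursion becomes $r_{k+1}\le (1+C_3)^{-1} r_k$, i.e., linear convergence. Finally when $\theta\in(0,\tfrac{1}{3})$, $\alpha>1$, and the standard trick is to study $r_k^{1-\alpha}$: a first-order expansion of $r_{k+1}^{1-\alpha} - r_k^{1-\alpha}$ (valid because $r_k\downarrow 0$) shows this difference is bounded below by a positive constant $(\alpha-1)C_3$, so $r_k^{1-\alpha}\ge \Theta(k-k_0)$, giving $r_k\le \Theta\!\left((k-k_0)^{-1/(\alpha-1)}\right)=\Theta\!\left((k-k_0)^{-2/(1-3\theta)}\right)$, which is the claimed exponent.

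The main obstacle is the first step: certifying that the \KL inequality is uniformly applicable along the tail of the CR iterates with a single constant and single exponent $\theta$. Once that localization is secured, the remainder of the argument is a one-line reduction to the scalar recursion $r_k - r_{k+1}\ge C_3 r_{k+1}^\alpha$ plus a case split on $\alpha$ that is essentially identical in structure to the \KL analyses of first-order methods — only the exponent $\alpha = 3(1-\theta)/2$ (versus $2(1-\theta)$ for gradient descent) changes, which is precisely what drives the faster rates highlighted in the introduction.
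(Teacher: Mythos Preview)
Your plan is essentially the paper's own proof: localize via Nesterov's limit-point result plus the \KL uniformization, chain the two CR inequalities in \Cref{table: 1} with \cref{eq: KLsimple} to obtain the scalar recursion $r_k - r_{k+1} \ge C\, r_{k+1}^{\alpha}$ with $\alpha=\tfrac{3(1-\theta)}{2}$ (the paper writes the equivalent form $r_k \le C(r_{k-1}-r_k)^{1/\alpha}$, applying \KL at $\xb_k$ rather than $\xb_{k+1}$), and then split on $\alpha$.

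Two small points. First, in Case~4 the bare ``first-order expansion'' only yields
\[
r_{k+1}^{1-\alpha}-r_k^{1-\alpha}\;\ge\;(\alpha-1)C_3\,(r_{k+1}/r_k)^{\alpha},
\]
not the constant $(\alpha-1)C_3$; you still need the standard dichotomy---either $r_{k+1}/r_k$ stays bounded below, in which case the right-hand side is a positive constant, or it does not, in which case $r_{k+1}^{1-\alpha}-r_k^{1-\alpha}$ is large anyway because $r_k^{1-\alpha}\to\infty$---and this is exactly the two-case argument the paper writes out explicitly. Second, the sufficient-decrease line gives only $\sum_k\|\xb_{k+1}-\xb_k\|^3<\infty$, not absolute summability; that weaker fact is still enough for the localization step, since all you actually need there is $\|\xb_{k+1}-\xb_k\|\to 0$ together with the compact sublevel sets from \Cref{assum: f}.
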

From \Cref{thm: 2}, it can be seen that the function value sequence generated by CR has diverse asymptotic convergence rates in different regimes of the \KL parameter $\theta$. In particular, a larger $\theta$ implies a sharper local geometry that further facilitates the convergence. We note that the gradient dominance condition
discussed in \cite{Nesterov2006} locally corresponds to the \KL property in \cref{eq: KLsimple} with the special cases $\theta \in \{0, \frac{1}{2}\}$, and hence the convergence rate results in \Cref{thm: 2} generalize those in \cite[Theorems 6, 7]{Nesterov2006}. 

We can further compare the function value convergence rates of CR with those of gradient descent method \cite{Frankel2015} under the \KL property (see \Cref{table: 2} for the comparison).
In the \KL parameter regime $\theta \in [\frac{1}{3}, 1]$, the convergence rate of $\{f(\xb_{k}) \}_k$ for CR is super-linear---orderwise faster than the corresponding (sub)-linear convergence rate of gradient descent. Also, both methods converge sub-linearly in the parameter regime $\theta \in (0, \frac{1}{3})$, and the corresponding convergence rate of CR is still considerably faster than that of gradient descent. 
\begin{table}[ht]
	\caption{Comparison of convergence rate of $\{f(\xb_{k}) \}_k$ between gradient descent and CR.}\label{table: 2}
	\center
	\begin{tabular}{ccc}
		\toprule
		{\KL parameter} & {gradient descent} & {cubic-regularization} \\ \midrule
		$\theta = 1$   & finite-step & finite-step  \\
		\midrule
		$\theta \in [\frac{1}{2}, 1)$    & linear & super-linear  \\
		\midrule
		$\theta \in [\frac{1}{3}, \frac{1}{2})$   & sub-linear & (super)-linear \\
		\midrule
		$\theta \in (0, \frac{1}{3})$   & sub-linear $\mathcal{O}(k^{-\frac{1}{1-2\theta}})$ & sub-linear $\mathcal{O}(k^{-\frac{2}{1-3\theta}})$\\
		\bottomrule
	\end{tabular}
\end{table}
\vspace{-5pt}
\subsection{Convergence Rate of Variable Distance for CR}
\vspace{-5pt}
It has been proved in \cite[Theorem 2]{Nesterov2006} that all limit points of $\{\xb_{k}\}_k$ generated by CR are second order stationary points. However, the sequence is not guaranteed to be convergent and no convergence rate is established.

Our next two results show that the sequence $\{\xb_{k}\}_k$ generated by CR is convergent under the \KL property.
\begin{restatable}{thm}{thmfinitelen}\label{thm: finite length}
	Let \Cref{assum: f} hold and assume that problem $\mathrm{(P)}$ satisfies the \KL property. Then, the sequence $\{\xb_k \}_k$ generated by CR satisfies
	\begin{align}
	\sum_{k=0}^{\infty} \|\xb_{k+1} - \xb_{k} \| < +\infty. \label{eq: 4}
	\end{align}
\end{restatable}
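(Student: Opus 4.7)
The plan is to follow the classical Attouch--Bolte template for establishing finite length under the \KL property, but instrumented with the two CR dynamics recorded in Table~\ref{table: 1} instead of the usual gradient-descent dynamics. The statement does not depend on $\theta$, so I only need one uniform argument that absorbs all regimes.

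\textbf{Setup.} By \Cref{assum: f} the initial sublevel set is compact and the dynamics $f(\xb_{k+1})-f(\xb_{k})\le -\tfrac{L}{12}\|\xb_{k+1}-\xb_{k}\|^3$ keep $\{\xb_{k}\}$ inside it, while $\inf f>-\infty$ makes $f(\xb_{k})$ non-increasing with a finite limit $\bar f$. Summing the sufficient-decrease estimate shows $\|\xb_{k+1}-\xb_{k}\|\to 0$, from which one obtains that the limit-point set $\Omega$ is non-empty, compact, contained in $\{\nabla f=\zero\}$, and $f\equiv\bar f$ on $\Omega$; moreover $\dist_\Omega(\xb_{k})\to 0$. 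A uniformization argument (essentially the standard \KL lemma on a tube around $\Omega$) then yields constants $c,\theta,\varepsilon,\lambda$ and an index $k_0$ such that for all $k\ge k_0$ with $f(\xb_{k})>\bar f$,
\begin{align*}
\varphi'\bigl(f(\xb_{k})-\bar f\bigr)\cdot\|\nabla f(\xb_{k})\|\ge 1,\qquad \varphi(t)=\tfrac{c}{\theta}t^{\theta}.
\end{align*}
If $f(\xb_{k})=\bar f$ ever holds, sufficient decrease forces $\xb_{k+j}=\xb_{k}$ for all $j\ge 0$ and \eqref{eq: 4} is trivial; so I will assume $f(\xb_{k})>\bar f$ throughout the tail.

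\textbf{Core recursion.} Let $\phi_{k}:=\varphi(f(\xb_{k})-\bar f)$ and $\Delta_{k}:=\|\xb_{k+1}-\xb_{k}\|$. Concavity of $\varphi$ gives $\phi_{k}-\phi_{k+1}\ge\varphi'(f(\xb_{k})-\bar f)\bigl(f(\xb_{k})-f(\xb_{k+1})\bigr)$. Bounding $\varphi'$ from below by $1/\|\nabla f(\xb_{k})\|$ via \KL, the decrement from below by $\tfrac{L}{12}\Delta_{k}^{3}$ via the first row of Table~\ref{table: 1}, and $\|\nabla f(\xb_{k})\|$ from above by $L\Delta_{k-1}^{2}$ via the second row of Table~\ref{table: 1}, the three ingredients multiply out to
\begin{align*}
\phi_{k}-\phi_{k+1}\;\ge\;\frac{1}{12}\cdot\frac{\Delta_{k}^{3}}{\Delta_{k-1}^{2}},
\qquad\text{hence}\qquad
\Delta_{k}\;\le\;12^{1/3}\,\Delta_{k-1}^{2/3}\bigl(\phi_{k}-\phi_{k+1}\bigr)^{1/3}.
\end{align*}

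\textbf{Summation via weighted Young.} The weighted Young inequality $a^{2/3}b^{1/3}\le\tfrac{2\lambda^{3/2}}{3}a+\tfrac{1}{3\lambda^{3}}b$, with $\lambda$ chosen so that $12^{1/3}\cdot\tfrac{2\lambda^{3/2}}{3}=\tfrac12$, converts the above into
$\Delta_{k}\le \tfrac12\Delta_{k-1}+C(\phi_{k}-\phi_{k+1})$ for an explicit constant $C$. Summing this over $k=k_{0},\dots,N$, telescoping the $\phi_{k}$-difference, and rearranging the geometric factor $\tfrac12$ gives $\sum_{k=k_{0}}^{N-1}\Delta_{k}\le \Delta_{k_{0}-1}+2C\,\phi_{k_{0}}$, a bound independent of $N$. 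Together with the finitely many terms $\Delta_{0},\dots,\Delta_{k_{0}-1}$, this delivers \eqref{eq: 4}.

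\textbf{Main obstacle.} The only genuinely delicate step is the uniformization of \KL from \Cref{def: KL} (which is stated for a single compact level set) to a single inequality that holds at every tail iterate with common constants $(c,\theta)$. This requires showing $f$ really is constant on the full limit-point set $\Omega$ and controlling $\dist_\Omega(\xb_{k})$ and $f(\xb_{k})-\bar f$ simultaneously in the tube $\{\dist_\Omega<\varepsilon,\ \bar f<f<\bar f+\lambda\}$; once this is in place, the rest of the argument is a three-line algebraic combination of the two CR dynamics plus a single Young rescaling.
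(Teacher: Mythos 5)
Your proof is correct, and the core of it coincides with the paper's: both arguments combine the concavity of $\varphi$, the \KL inequality, and the two CR dynamics from \Cref{table: 1} to arrive at the same key recursion $\|\xb_{k+1}-\xb_k\|\le C\,\|\xb_k-\xb_{k-1}\|^{2/3}\bigl(\varphi(r_k)-\varphi(r_{k+1})\bigr)^{1/3}$. Where you diverge is in how this recursion is summed. The paper sums the raw inequality over $k=k_0,\dots,n$ and applies H\"older's inequality to the whole partial sums, obtaining $S_n\le C\,[\varphi(r_{k_0})]^{1/3}(S_n+a)^{2/3}$ with $S_n=\sum_{k=k_0}^n\|\xb_{k+1}-\xb_k\|$, and then concludes finiteness by noting this self-bound is incompatible with $S_n\to\infty$. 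You instead linearize each term pointwise with a weighted Young inequality, $\Delta_k\le\tfrac12\Delta_{k-1}+C(\phi_k-\phi_{k+1})$, and telescope; this is the Attouch--Bolte--Svaiter style finish and it buys you an explicit, quantitative tail bound $\sum_{k\ge k_0}\Delta_k\le\Delta_{k_0-1}+2C\varphi(r_{k_0})$ rather than a bare finiteness conclusion. (Your recursion is in fact the same device the paper later redeploys, with a $\gamma$-case-split in place of Young, to get the rate of $\Delta_k$ in the proof of \Cref{thm: converge_ite}.) One small remark on what you flag as the ``main obstacle'': in this paper the uniformization is already built into \Cref{def: KL}, which quantifies over all points in a tube around any compact set on which $f$ is constant; combined with \cite[Theorem 2]{Nesterov2006} (the limit-point set is nonempty, compact, and carries the constant value $\bar f$, with $\dist_{\omega(\xb_0)}(\xb_k)\to 0$), the tail iterates land in that tube and no separate uniformized-\KL lemma is needed. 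Your handling of the degenerate case $f(\xb_k)=\bar f$ is also correct and is a detail the paper's own proof leaves implicit.
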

\Cref{thm: finite length} implies that CR generates an iterate $\{\xb_{k} \}_k$ with finite trajectory length, i.e., $\|\xb_{\infty} - \xb_{0}\|< +\infty$.
In particular, \cref{eq: 4} shows that the sequence is absolutely summable, which strengthens the result in \cite{Nesterov2006} that establishes the cubic summability instead, i.e., $\sum_{k=0}^{\infty} \|\xb_{k+1} - \xb_{k} \|^3 < +\infty$. We note that the cubic summability property does not guarantee that the sequence is convergence, for example, the sequence $\{\frac{1}{k} \}_k$ is cubic summable but is not absolutely summable. In comparison, the summability property in \cref{eq: 4} directly implies that the sequence $\{\xb_k \}_k$ generated by CR is a Cauchy convergent sequence, and we obtain the following result.
\begin{restatable}{coro}{corofinitelen}\label{coro: finite length}
	Let \Cref{assum: f} hold and assume that problem $\mathrm{(P)}$ satisfies the \KL property. Then, the sequence $\{\xb_k \}_k$ generated by CR is a Cauchy sequence and converges to some second-order stationary point $\bar{\xb}$.
\end{restatable}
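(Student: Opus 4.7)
The plan is to derive the corollary as a direct consequence of Theorem~\ref{thm: finite length}, together with completeness of $\RR^d$ and the already established fact that every limit point of the CR iterates is second-order stationary.

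First I would invoke Theorem~\ref{thm: finite length} to get $\sum_{k=0}^{\infty}\|\xb_{k+1}-\xb_{k}\|<+\infty$. This implies that the tail sums vanish: for every $\varepsilon>0$ there exists $N\in\mathds{N}$ such that $\sum_{k=N}^{\infty}\|\xb_{k+1}-\xb_{k}\|<\varepsilon$. Then for any integers $m>n\ge N$, the triangle inequality gives
\begin{align*}
\|\xb_{m}-\xb_{n}\| \le \sum_{k=n}^{m-1}\|\xb_{k+1}-\xb_{k}\| \le \sum_{k=N}^{\infty}\|\xb_{k+1}-\xb_{k}\| < \varepsilon,
\end{align*}
which shows that $\{\xb_{k}\}_k$ is a Cauchy sequence. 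Since $\RR^d$ is complete, there exists $\bar{\xb}\in\RR^d$ such that $\xb_{k}\to \bar{\xb}$.

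Next I would identify $\bar{\xb}$ as a second-order stationary point. By Assumption~\ref{assum: f}, the sub-level sets of $f$ are compact, and since $\{f(\xb_{k})\}_k$ is non-increasing (a standard descent property of CR noted in Table~\ref{table: 1}), the iterates remain in a compact sub-level set. On this set, $\nabla f$ and $\nabla^2 f$ are continuous. By the known result of Nesterov (cited in the paragraph preceding Theorem~\ref{thm: finite length}), every limit point of $\{\xb_{k}\}_k$ is second-order stationary, i.e., satisfies $\nabla f(\bar{\xb})=\zero$ and $\nabla^2 f(\bar{\xb})\succeq \zero$. Since $\xb_{k}\to \bar{\xb}$, the sequence has $\bar{\xb}$ as its unique limit point, and consequently $\bar{\xb}$ satisfies the second-order stationary condition. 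This gives the conclusion.

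This corollary is essentially a packaging step: Theorem~\ref{thm: finite length} is the substantive content, and the present statement is a standard deduction. There is no real obstacle beyond reconciling the limit identification with the previously cited second-order stationarity of all accumulation points; the only delicate point is noting that strengthening cubic summability to absolute summability (the content of Theorem~\ref{thm: finite length}) is precisely what upgrades the qualitative limit-point statement of \cite{Nesterov2006} to a genuine convergence statement.
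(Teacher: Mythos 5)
Your proposal is correct and matches the paper's (implicit) argument: the paper likewise treats the corollary as an immediate consequence of the absolute summability in Theorem~\ref{thm: finite length} (summable increments $\Rightarrow$ Cauchy $\Rightarrow$ convergent in $\RR^d$), with second-order stationarity of the limit supplied by the cited result of \cite{Nesterov2006} that every limit point of the CR iterates is second-order stationary.
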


We note that the convergence of $\{\xb_{k} \}_k$ to a second-order stationary point is also established for CR in \cite{Yue2018}, but under the special error bound condition, whereas we establish the convergence of $\{\xb_{k} \}_k$ under the \KL property that holds for general nonconvex functions.

Next, we establish the convergence rate of $\{\xb_{k} \}_k$ to the second-order stationary limit $\bar{\xb}$.
\begin{restatable}{thm}{thmiterate}\label{thm: converge_ite}
	Let \Cref{assum: f} hold and assume that problem $\mathrm{(P)}$ satisfies the \KL property. Then, there exists a sufficiently large $k_0\in \mathds{N}$ such that for all $k\ge k_0$ the sequence $\{\xb_k\}_k$ generated by CR satisfies
	\begin{enumerate}[leftmargin=*,topsep=0pt,noitemsep]
		\item If $\theta = 1$, then $\xb_{k}\to \bar{\xb}$ within finite number of iterations;
		\item If $\theta \in (\frac{1}{3}, 1)$, then $\xb_{k}\to \bar{\xb}$ super-linearly as 
		$\|\xb_{k+1} - \bar{\xb}\| \le \Theta \Big( \exp \Big(-\big(\frac{2\theta}{3(1-\theta)} + \frac{2}{3}\big)^{k-k_0}\Big) \Big);$
		\item If $\theta = \frac{1}{3}$, then $\xb_{k}\to \bar{\xb}$ linearly as 
		$\|\xb_{k+1} - \bar{\xb}\| \le \Theta \Big( \exp \big(-(k-k_0)\big) \Big);$
		\item If $\theta \in (0, \frac{1}{3})$, then $\xb_{k}\to \bar{\xb}$ sub-linearly as 
		$\|\xb_{k+1} - \bar{\xb}\| \le \Theta \Big((k-k_0)^{-\frac{2\theta}{1-3\theta}}\Big).$
	\end{enumerate}
\end{restatable}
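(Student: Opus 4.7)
The plan is to control $\|\xb_K - \bar{\xb}\|$ by the tail trajectory length $T_K := \sum_{k \ge K} \|\xb_{k+1} - \xb_k\|$, which is finite thanks to \Cref{thm: finite length} and well-pointed to $\bar{\xb}$ by \Cref{coro: finite length}. Writing $\Delta_k := \|\xb_{k+1} - \xb_k\|$ and $r_k := f(\xb_k) - \bar{f}$, the first step is to derive a sharp one-step recursion by combining three ingredients: the cubic descent $r_k - r_{k+1} \ge \tfrac{L}{12}\Delta_k^3$ from \Cref{table: 1}, the gradient bound $\|\nabla f(\xb_k)\| \le L\Delta_{k-1}^2$ also from \Cref{table: 1}, and the \KL inequality $\varphi'(r_k)\|\nabla f(\xb_k)\| \ge 1$ with $\varphi(t) = (c/\theta)t^\theta$, applied to the compact set $\Omega = \{\bar{\xb}\}$. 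Using concavity of $\varphi$ (which holds because $\theta \in (0,1]$), these combine into the pivotal estimate
\[
\varphi(r_k) - \varphi(r_{k+1}) \ge \varphi'(r_k)(r_k - r_{k+1}) \ge \frac{\Delta_k^3}{12\, \Delta_{k-1}^2}.
\]

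Next I would apply the three-term AM--GM inequality to $\Delta_{k-1}\cdot \Delta_{k-1}\cdot 12(\varphi(r_k)-\varphi(r_{k+1})) \ge \Delta_k^{3}$ to obtain the contractive recursion
\[
\Delta_k \le \tfrac{2}{3}\Delta_{k-1} + 4\bigl(\varphi(r_k) - \varphi(r_{k+1})\bigr).
\]
Summing from $k=K$ to $\infty$, telescoping the $\varphi$-differences, and exploiting the contraction constant $\tfrac{2}{3}<1$ then yield
\[
\|\xb_K - \bar{\xb}\| \le T_K \le 2\Delta_{K-1} + \tfrac{12c}{\theta}\, r_K^{\theta}.
\]

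Having reduced the problem to two explicit terms, I would plug in the rates on $r_K$ supplied by \Cref{thm: 2} and bound $\Delta_{K-1} \le (12\, r_{K-1}/L)^{1/3}$ directly from cubic descent. A short case analysis on $\theta$ then identifies the dominant term. For $\theta \in (\tfrac{1}{3},1)$, the inequality $\theta > (1-\theta)/2$ shows that $\Delta_{K-1}$ dominates and produces a super-linear rate whose base coincides with the stated $\tfrac{2\theta}{3(1-\theta)}+\tfrac{2}{3}=\tfrac{2}{3(1-\theta)}$ after algebraic simplification. For $\theta \in (0,\tfrac{1}{3})$, the comparison of exponents $\tfrac{2\theta}{1-3\theta} < \tfrac{2}{3(1-3\theta)}$ shows $r_K^\theta$ dominates, giving the sub-linear rate $(k-k_0)^{-2\theta/(1-3\theta)}$. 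The boundary $\theta=\tfrac{1}{3}$ is linear, and $\theta=1$ gives finite termination because $r_K\equiv 0$ beyond some index forces $\Delta_{K-1}=0$ via cubic descent.

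The main obstacle is establishing the contractive recursion above with a contraction constant strictly less than one, since only then does the telescoping close into a bound of $T_K$ purely in terms of $\Delta_{K-1}$ and $\varphi(r_K)$. A naive application of Young's inequality with too large a weight would only control the full series $\sum_k\Delta_k$ (as in the proof of \Cref{thm: finite length}) but would not produce the tail bound needed here. Once the contraction is in place, the remainder of the argument is a direct exponent book-keeping that inherits its rates from \Cref{thm: 2}.
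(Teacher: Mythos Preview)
Your argument is correct, and in several respects cleaner than the paper's. Both proofs start from the same pivotal inequality $\varphi(r_k)-\varphi(r_{k+1}) \ge C\,\|\xb_{k+1}-\xb_k\|^3/\|\xb_k-\xb_{k-1}\|^2$ (eq.~(7) in the paper), but they diverge from there. The paper treats the regimes separately: for $\theta\in(\tfrac13,1)$ it reuses the H\"older estimate from the finite-length proof to obtain a self-contained super-linear recursion $\Delta_k \le C\Delta_{k-1}^{\,2\theta/(3(1-\theta))+2/3}$ on the \emph{tail sums} $\Delta_k := \sum_{i\ge k}\|\xb_{i+1}-\xb_i\|$; for $\theta\in(0,\tfrac13]$ it uses a dichotomy (either $\|\xb_{k+1}-\xb_k\|\ge \gamma\|\xb_k-\xb_{k-1}\|$ or not) to reach a contractive inequality with parameter $\gamma$, then bounds $r_k^\theta$ back in terms of $\Delta_{k-1}-\Delta_k$ and solves the resulting recursion in $\Delta_k$ alone via a separate integral/telescoping argument.

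Your AM--GM step replaces both the H\"older argument and the dichotomy in one stroke, yielding the single contractive inequality $\|\xb_{k+1}-\xb_k\|\le \tfrac23\|\xb_k-\xb_{k-1}\|+4(\varphi(r_k)-\varphi(r_{k+1}))$ with an explicit contraction constant $\tfrac23<1$. Summing then gives the closed-form tail bound $T_K \le 2\|\xb_K-\xb_{K-1}\| + \tfrac{12c}{\theta}r_K^\theta$, after which you bootstrap the rates directly from \Cref{thm: 2}. This is more modular---the case analysis reduces to comparing exponents of two explicit terms rather than solving four distinct recursions---and it makes transparent that the iterate rate is inherited from the function-value rate. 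The paper's route is more self-contained (it never invokes \Cref{thm: 2}) and in Case~2 produces the super-linear exponent directly as a recursion on $\Delta_k$, but at the cost of a lengthier and less unified argument. One minor caution: be careful that your notation $\Delta_k=\|\xb_{k+1}-\xb_k\|$ clashes with the paper's $\Delta_k=\sum_{i\ge k}\|\xb_{i+1}-\xb_i\|$; otherwise the plan goes through as stated.
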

From \Cref{thm: converge_ite}, it can be seen that the convergence rate of $\{\xb_{k}\}_k$ is similar to that of $\{f(\xb_{k})\}_k$ in \Cref{thm: 2} in the corresponding regimes of the \KL parameter $\theta$. Essentially, a larger parameter $\theta$ induces a sharper local geometry that leads to a faster convergence. 

We can further compare the variable convergence rate of CR in \Cref{thm: converge_ite} with that of gradient descent method \cite{Attouch2009} (see \Cref{table: 3} for the comparison).  It can be seen that the variable sequence generated by CR converges orderwise faster than that generated by the gradient descent method in a large parameter regimes of $\theta$. 

\begin{table}[ht]
	\caption{Comparison of convergence rate of $\{\xb_{k} \}_k$ between gradient descent and CR.}\label{table: 3}
	\center
	\begin{tabular}{ccc}
		\toprule
		{\KL parameter} & {gradient descent} & {cubic-regularization} \\ \midrule
		$\theta = 1$   & finite-step & finite-step  \\
		\midrule
		$\theta \in [\frac{1}{2}, 1)$    & linear & super-linear  \\
		\midrule
		$\theta \in [\frac{1}{3}, \frac{1}{2})$   & sub-linear & (super)-linear \\
		\midrule
		$\theta \in (0, \frac{1}{3})$   & sub-linear $\mathcal{O}(k^{-\frac{\theta}{1-2\theta}})$ & sub-linear $\mathcal{O}(k^{-\frac{2\theta}{1-3\theta}})$\\
		\bottomrule
	\end{tabular}
\end{table}
\vspace{-5pt}
\subsection{Extension to Inexact Cubic Regularization}
\vspace{-5pt}
All our convergence results for CR in previous sections are based on the algorithm dynamics \Cref{table: 1} and the \KL property of the objective function. In fact, such dynamics of CR has been shown to be satisfied by other inexact variants of CR \cite{Cartis2011a,Cartis2011b,kohler2017,Wang2018} with different constant terms. These inexact variants of CR updates the variable by solving the cubic subproblem in \cref{eq: CR} with the inexact gradient $\nabla \widehat{f}(\xb_{k})$ and inexact Hessian
$\nabla^2 \widehat{f}(\xb_{k})$ that satisfy the following inexact criterion
\begin{align}
	&\|\nabla \widehat{f}(\xb_{k}) - {\nabla f}(\xb_{k})\| \le c_1 \|\xb_{k+1} - \xb_{k}\|^2, \\
	&\big\|\big(\nabla^2 \widehat{f}(\xb_{k}) - {\nabla^2 f}(\xb_{k})\big)(\xb_{k+1} - \xb_{k}) \big\| \le c_2 \|\xb_{k+1} - \xb_{k}\|^2,
\end{align}
where $c_1, c_2$ are positive constants. Such inexact criterion can reduce the computational complexity of CR and can be realized via various types of subsampling schemes \cite{kohler2017,Wang2018}.

Since the above inexact-CR also satisfies the dynamics in \Cref{table: 1} (with different constant terms), all our convergence results for CR can be directly applied to inexact CR. Then, we obtain the following corollary.    
\begin{coro}[Inexact-CR]
	Let \Cref{assum: f} hold and assume that problem $\mathrm{(P)}$ satisfies the \KL property. Then, the sequences $\{\mu(\xb_{k}) \}_k, \{f(\xb_{k}) \}_k, \{\xb_{k}\}_k$ generated by the inexact-CR satisfy respectively the results in Theorems \ref{coro: mu}, \ref{thm: 2}, \ref{thm: finite length} and \ref{thm: converge_ite}.
\end{coro}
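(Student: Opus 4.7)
The strategy is to observe that \Cref{coro: mu}, \Cref{thm: 2}, \Cref{thm: finite length}, and \Cref{thm: converge_ite} are all proved by combining the \KL inequality \cref{eq: KLsimple} with only three structural estimates of the algorithm: (i) a cubic sufficient-decrease bound $f(\xb_{k}) - f(\xb_{k+1}) \ge \kappa_1 \|\xb_{k+1}-\xb_{k}\|^3$; (ii) a quadratic gradient bound $\|\nabla f(\xb_{k+1})\| \le \kappa_2 \|\xb_{k+1}-\xb_{k}\|^2$; and (iii) a linear negative-curvature bound $-\lambda_{\min}(\nabla^2 f(\xb_{k+1})) \le \kappa_3 \|\xb_{k+1}-\xb_{k}\|$, exactly the three rows of \Cref{table: 1}. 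Nowhere in those proofs do the specific constants $\kappa_1,\kappa_2,\kappa_3$ enter the exponent on $k$ or on $\theta$ in the resulting rates. Therefore the plan is to verify (i)-(iii) for inexact-CR with some other positive constants and then invoke the four theorems as black boxes.

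Writing $s_k := \xb_{k+1} - \xb_{k}$, the first-order and second-order optimality conditions of the inexact cubic subproblem read
\begin{align*}
\nabla \widehat{f}(\xb_{k}) + \nabla^2 \widehat{f}(\xb_{k}) s_k + \tfrac{M}{2}\|s_k\| s_k = \zero, \qquad \nabla^2 \widehat{f}(\xb_{k}) + \tfrac{M}{2}\|s_k\| I \succeq \zero.
\end{align*}
To get (ii), I would Taylor-expand $\nabla f(\xb_{k+1}) = \nabla f(\xb_{k}) + \nabla^2 f(\xb_{k}) s_k + R_k$ where $\|R_k\| \le \tfrac{L}{2}\|s_k\|^2$ by Hessian-Lipschitzness, substitute the first-order condition, and then use the inexactness hypotheses $\|\nabla\widehat{f}(\xb_{k}) - \nabla f(\xb_{k})\| \le c_1\|s_k\|^2$ and $\|(\nabla^2\widehat{f}(\xb_{k}) - \nabla^2 f(\xb_{k}))s_k\| \le c_2\|s_k\|^2$ to replace $\nabla\widehat{f}$ by $\nabla f$ and $\nabla^2\widehat{f}(\xb_{k}) s_k$ by $\nabla^2 f(\xb_{k}) s_k$, yielding (ii) with $\kappa_2 = \tfrac{L+M}{2} + c_1 + c_2$. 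For (iii), contract the second-order condition with the unit eigenvector of $\nabla^2 f(\xb_{k+1})$ associated with $\lambda_{\min}$, and then bound $\|\nabla^2 f(\xb_{k+1}) - \nabla^2\widehat{f}(\xb_{k})\|$ from above using the triangle inequality, Hessian-Lipschitzness on the exact piece, and the inexactness hypothesis on the surrogate piece, giving $\kappa_3 = L + \tfrac{M}{2} + c_2$. For (i), I would combine the standard third-order Taylor remainder $f(\xb_{k+1}) \le f(\xb_{k}) + \langle \nabla f(\xb_{k}), s_k\rangle + \tfrac{1}{2} s_k^\ttop \nabla^2 f(\xb_{k}) s_k + \tfrac{L}{6}\|s_k\|^3$ with the two inexactness bounds applied at $s_k$, and compare the subproblem value at $s_k$ to that at $\zero$; this is the standard computation in \cite{Cartis2011a,Cartis2011b,kohler2017,Wang2018} and it produces $\kappa_1 > 0$ whenever $M$ is chosen to dominate $L$ by a margin that absorbs $c_1$ and $c_2$.

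With (i)-(iii) established, the proofs of the four invoked theorems transcribe verbatim, since the \KL inequality is a property of the objective $f$ itself and is unaffected by the subproblem inexactness, and the regime boundaries at $\theta = 1$, $\theta = \tfrac{1}{3}$ depend only on which powers of $\|s_k\|$ appear in (i)-(iii), not on the constants. The main obstacle, and the place where care is needed, is to verify that the inexactness-induced error terms enter at the \emph{right polynomial order}: if the inexact criterion had been $\|\nabla\widehat{f}(\xb_{k}) - \nabla f(\xb_{k})\| \le c_1\|s_k\|$ rather than $c_1\|s_k\|^2$, then (ii) would degrade from quadratic to linear in $\|s_k\|$ and all of the $\theta$-regime boundaries would shift. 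Thus the crucial step is the bookkeeping that confirms the stated inexact criteria preserve the cubic/quadratic/linear scaling in \Cref{table: 1}; once this is checked, the corollary follows immediately.
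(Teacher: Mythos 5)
Your overall strategy coincides with the paper's: the paper's entire justification for this corollary is the one-sentence observation that inexact CR satisfies the dynamics of \Cref{table: 1} with different constants, after which the four theorems are invoked as black boxes; you flesh out the verification of those dynamics that the paper delegates to \cite{Cartis2011a,Cartis2011b,kohler2017,Wang2018}. There are, however, two concrete gaps in the bookkeeping. First, your premise that the invoked theorems consume \emph{only} the three rows of \Cref{table: 1} plus the \KL inequality is not accurate for \Cref{coro: mu}: its proof also passes through \Cref{thm: dist} and, crucially, through the bound $\|\xb_{k+1}-\xb_{k}\| \le C\,\dist_\Omega(\xb_{k})$ from \cite[Lemma 1]{Yue2018}. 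That bound is a fourth structural property of the algorithm, derived by comparing the value of the \emph{exact} cubic subproblem at $s_k$ with its value at $\proj{\Omega}(\xb_k)-\xb_k$, and it must be re-established for the inexact subproblem before \Cref{coro: mu} (and Case 2 of \Cref{thm: dist}) can be transcribed verbatim. Your inventory misses this item, so the black-box invocation is not yet justified for the sequence $\{\mu(\xb_k)\}_k$.

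Second, your derivation of the negative-curvature row (iii) does not follow from the inexactness criterion as stated. The hypothesis only controls the Hessian error in the direction $s_k$, namely $\|(\nabla^2\widehat f(\xb_k)-\nabla^2 f(\xb_k))s_k\|\le c_2\|s_k\|^2$, whereas contracting the second-order optimality condition with the unit eigenvector $\vb$ of $\nabla^2 f(\xb_{k+1})$ attached to $\lambda_{\min}$ requires a bound on $\vb^\ttop(\nabla^2\widehat f(\xb_k)-\nabla^2 f(\xb_k))\vb$, and $\vb$ is in general not parallel to $s_k$. Obtaining your $\kappa_3$ needs an operator-norm-type condition such as $\|\nabla^2\widehat f(\xb_k)-\nabla^2 f(\xb_k)\|\le c_2\|s_k\|$, which is what the cited subsampling schemes actually enforce. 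So the step you yourself identify as the crux --- confirming that the inexactness enters at the right polynomial order in each row of \Cref{table: 1} --- does not close for the third row under the hypotheses exactly as written; either the Hessian criterion must be strengthened or that row must be argued differently.
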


\section{Convergence Rate of CR under \KL Error Bound}
In \cite{Yue2018}, it was shown that the gradient dominance condition (i.e., \cref{eq: KLsimple} with $\theta = \frac{1}{2}$) implies the following local error bound, which further leads to the quadratic convergence of CR. 
\begin{definition}
	Denote $\Omega$ as the set of second-order stationary points of $f$. Then, $f$ is said to satisfy the local error bound condition if there exists $\kappa, \rho >0$ such that
	\begin{align}
		\dist_\Omega (\xb) \le \kappa \|\nabla f(\xb)\|, \quad \forall~ \dist_\Omega (\xb) \le \rho. \label{eq: EB}
	\end{align}
\end{definition}
As the \KL property generalizes the gradient dominance condition, it implies a much more general spectrum of the geometry that includes the error bound in \cite{Yue2018} as a special case. Next, we first show that the \KL property implies the \KL-error bound, and then exploit such an error bound to establish the convergence of CR. 
\begin{restatable}{proposition}{propeb}\label{prop: 1}
	Denote $\Omega$ as the set of second-order stationary points of $f$. Let \Cref{assum: f} hold and assume that $f$ satisfies the \KL property. Then, there exist $\kappa, \varepsilon, \lambda >0$  such that for all $\xb\in \{\zb\in \RR^d : \dist_\Omega(\zb)<\varepsilon, f_\Omega < f(\zb) <f_\Omega + \lambda\}$, the following property holds.
	\begin{align}
	(\text{\KL-error bound})\quad&\dist_\Omega (\xb) \le \kappa \|\nabla f(\xb)\|^{\frac{\theta}{1-\theta}}. \label{eq: eb}
	\end{align}
\end{restatable}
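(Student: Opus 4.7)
The plan is to combine two consequences of the \KL property: an upper bound on the function value gap $f(\xb)-f_\Omega$ in terms of $\|\nabla f(\xb)\|$, and an upper bound on $\dist_\Omega(\xb)$ in terms of $(f(\xb)-f_\Omega)^\theta$ obtained via a descent-trajectory argument. Composing the two yields the \KL-error bound with exponent $\theta/(1-\theta)$.

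First, I would rewrite the \KL inequality. With $\varphi(t)=\frac{c}{\theta}t^\theta$, we have $\varphi'(t)=ct^{\theta-1}$, so \cref{eq: KL} becomes $c(f(\xb)-f_\Omega)^{\theta-1}\|\nabla f(\xb)\|\ge 1$, which rearranges to
\begin{align*}
f(\xb)-f_\Omega \;\le\; c^{\,1/(1-\theta)}\,\|\nabla f(\xb)\|^{1/(1-\theta)}.
\end{align*}
This is exactly \cref{eq: KLsimple} and handles half of the bound.

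Second, I would bound $\dist_\Omega(\xb)$ by $(f(\xb)-f_\Omega)^\theta$ using a trajectory argument. Initialize CR at $\xb_0=\xb$; by \Cref{coro: finite length}, the iterates form a Cauchy sequence converging to some $\bar{\xb}\in\Omega$. Along this trajectory, the classical telescoping argument that underlies \Cref{thm: finite length} (descent lemma gives $\|\xb_{k+1}-\xb_k\|^3\lesssim f(\xb_k)-f(\xb_{k+1})$; the \KL inequality applied at $\xb_k$ controls $\nabla f(\xb_k)$, which in turn controls $\|\xb_k-\xb_{k-1}\|$ via the second line of \Cref{table: 1}; combining with concavity of $\varphi$) produces a bound of the form
\begin{align*}
\sum_{k=0}^{\infty}\|\xb_{k+1}-\xb_k\| \;\le\; C_1\,\varphi(f(\xb)-f_\Omega) \;=\; \frac{C_1 c}{\theta}(f(\xb)-f_\Omega)^\theta,
\end{align*}
for some constant $C_1$ depending only on $L,M$. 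By the triangle inequality, $\dist_\Omega(\xb)\le\|\xb-\bar{\xb}\|\le\sum_k\|\xb_{k+1}-\xb_k\|$, giving $\dist_\Omega(\xb)\le C_2 (f(\xb)-f_\Omega)^\theta$. Substituting the bound from the first step yields
\begin{align*}
\dist_\Omega(\xb) \;\le\; C_2\,(f(\xb)-f_\Omega)^\theta \;\le\; C_2\, c^{\,\theta/(1-\theta)}\,\|\nabla f(\xb)\|^{\theta/(1-\theta)},
\end{align*}
which is \cref{eq: eb} with $\kappa := C_2\, c^{\,\theta/(1-\theta)}$.

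The main obstacle will be step two: one must verify that the entire CR trajectory initialized at an arbitrary $\xb$ in the \KL neighborhood $\{\dist_\Omega(\zb)<\varepsilon,\,f_\Omega<f(\zb)<f_\Omega+\lambda\}$ stays inside that neighborhood so that the \KL inequality can be applied at every iterate, and that the telescoping sum is valid uniformly in $\xb$. This is handled by shrinking $\varepsilon,\lambda$ as needed and using monotonicity of $\{f(\xb_k)\}_k$ together with the trajectory-length bound to keep all iterates inside the neighborhood; also, boundary cases $\nabla f(\xb)=0$ or $f(\xb)=f_\Omega$ should be disposed of separately since both sides of \cref{eq: eb} are then zero by a continuity/critical-point argument.
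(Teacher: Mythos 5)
Your overall decomposition is the same as the paper's: combine $f(\xb)-f_\Omega\le C\|\nabla f(\xb)\|^{1/(1-\theta)}$ (i.e.\ \cref{eq: KLsimple}) with a trajectory-length bound $\dist_\Omega(\xb)\le C_2(f(\xb)-f_\Omega)^\theta$. The gap is in how you obtain the second bound. The paper does \emph{not} use the discrete CR trajectory for this step; it follows \cite{Yue2018} and runs the continuous gradient flow $\dot{\ub}(t)=-\nabla f(\ub(t))$ from $\ub(0)=\xb$. For the flow, the chain rule gives $\dot{\Delta}(t)=-\|\nabla f(\ub(t))\|\,\|\dot{\ub}(t)\|$ exactly, so the \KL inequality converts the instantaneous speed into $-\tfrac{C^{1-\theta}}{\theta}(\Delta(t)^\theta)'$ pointwise in $t$, and integrating yields the clean length bound $\tfrac{C^{1-\theta}}{\theta}(f(\xb)-f_\Omega)^\theta$ with no leftover terms. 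Your discrete substitute does not deliver the inequality you display. The telescoping machinery of \Cref{thm: finite length} relates $\|\nabla f(\xb_k)\|$ to the \emph{previous} step via $\|\nabla f(\xb_k)\|\le \tfrac{L+M}{2}\|\xb_k-\xb_{k-1}\|^2$, so the resulting bound (the paper's \cref{eq: 6}) is of the form
\begin{align*}
\sum_{k=k_0}^{n}\|\xb_{k+1}-\xb_k\| \;\le\; C\,[\varphi(r_{k_0})]^{1/3}\Big[\textstyle\sum_{k=k_0}^{n}\|\xb_{k+1}-\xb_k\|+\|\xb_{k_0}-\xb_{k_0-1}\|\Big]^{2/3},
\end{align*}
which resolves to $\sum_k\|\xb_{k+1}-\xb_k\|\le C\max\{\varphi(r_{k_0}),\ \|\xb_{k_0}-\xb_{k_0-1}\|\}$. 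At $k_0=0$ with $\xb_0=\xb$ there is no predecessor, and the only a priori control on the first step is the descent inequality $\|\xb_1-\xb_0\|\le C(f(\xb)-f_\Omega)^{1/3}$. For $\theta>1/3$ this is strictly weaker than $(f(\xb)-f_\Omega)^\theta$, so your chain yields at best $\dist_\Omega(\xb)\le C\|\nabla f(\xb)\|^{\frac{1}{3(1-\theta)}}$ in exactly the regime where the proposition's exponent $\tfrac{\theta}{1-\theta}$ is larger. The alternative control $\|\xb_1-\xb_0\|\le C\dist_\Omega(\xb_0)$ from \cite[Lemma 1]{Yue2018} is circular here, since $\dist_\Omega(\xb_0)$ is the quantity being bounded.

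There is a second, related issue your "obstacle" paragraph does not resolve: you need every iterate $\xb_k$ to lie in a \KL neighborhood of $\Omega$ on which $f$ equals the \emph{same} constant $f_\Omega$, and you need $f(\xb_k)-f_\Omega>0$ throughout so that $\varphi(r_k)$ is defined. A discrete trajectory could in principle land on, or drop below, a component of $\Omega$ with a different function value; ruling this out by "shrinking $\varepsilon,\lambda$" requires precisely the uniform trajectory-length bound whose derivation is the problematic step. The continuous flow sidesteps both difficulties at once: uniqueness of ODE solutions (Picard--Lindel\"of) shows $\Delta(t)>0$ for all $t$ (otherwise the flow would be identically a stationary point, contradicting $\ub(0)\notin\Omega$), and the integral length bound holds from $t=0$ with no initial-step anomaly. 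To repair your argument you should either switch to the gradient-flow construction, or supply a genuinely new estimate showing $\|\xb_1-\xb_0\|\le C(f(\xb)-f_\Omega)^\theta$ (equivalently $\le C\|\nabla f(\xb)\|^{\theta/(1-\theta)}$) for the very first CR step; neither \Cref{table: 1} nor \Cref{thm: finite length} provides this.
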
	
We refer to the condition in \cref{eq: eb} as the  \KL-error bound, which generalizes the original error bound in \cref{eq: EB} under the \KL property. In particular, the \KL-error bound reduces to the error bound in the special case $\theta = \frac{1}{2}$. 
By exploiting the \KL error bound, we obtain the following convergence result regarding $\dist_\Omega(\xb_{k})$.

\begin{restatable}{proposition}{thmdist}\label{thm: dist}
	Denote $\Omega$ as the set of second-order stationary points of $f$. Let \Cref{assum: f} hold and assume that problem $\mathrm{(P)}$ satisfies the \KL property. Then, there exists a sufficiently large $k_0\in \mathds{N}$ such that for all $k\ge k_0$ the sequence $\{\dist_\Omega (\xb_{k}) \}_k$ generated by CR satisfies 
	\begin{enumerate}[leftmargin=*,topsep=0pt,noitemsep]
		\item If $\theta = 1$, then $\dist_\Omega(\xb_k) \to 0$ within finite number of iterations;
		\item If $\theta \in (\frac{1}{3}, 1)$, then $\dist_\Omega(\xb_k) \to 0$ super-linearly as 
		$\dist_\Omega(\xb_{k}) \le \Theta \Big( \exp \Big(-\big(\frac{2\theta}{1-\theta}\big)^{k-k_0}\Big) \Big);$
		\item If $\theta = \frac{1}{3}$, then $\dist_\Omega(\xb_k) \to 0$ linearly as 
		$\dist_\Omega(\xb_k) \le \Theta \Big( \exp \big(-(k-k_0)\big) \Big); $
		\item If $\theta \in (0, \frac{1}{3})$, then $\dist_\Omega(\xb_k) \to 0$ sub-linearly as 
		$\dist_\Omega(\xb_k) \le \Theta \Big((k-k_0)^{-\frac{2\theta}{1-3\theta}}\Big).$
	\end{enumerate}
\end{restatable}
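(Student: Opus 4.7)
The plan is to combine the \KL-error bound (Proposition \ref{prop: 1}) with the rate on $\mu(\xb_k)$ from Theorem \ref{coro: mu} in the sharp-geometry regime, and to majorize $\dist_\Omega(\xb_k)$ by the trajectory distance $\|\xb_k - \bar\xb\|$ in the flat-geometry regime. By Corollary \ref{coro: finite length} the sequence $\{\xb_k\}$ converges to a second-order stationary point $\bar\xb \in \Omega$, and by Theorem \ref{thm: 2} the function values decrease to $f_\Omega$; hence for all sufficiently large $k$ the iterate $\xb_k$ lies in the neighborhood where Proposition \ref{prop: 1} applies, giving
$$\dist_\Omega(\xb_k) \le \kappa \|\nabla f(\xb_k)\|^{\frac{\theta}{1-\theta}}.$$
Combined with the elementary bound $\|\nabla f(\xb_k)\| \le \tfrac{L+M}{2}\mu(\xb_k)^2$ implied by the definition of $\mu$, this yields
$$\dist_\Omega(\xb_k) \le \kappa' \, \mu(\xb_k)^{\frac{2\theta}{1-\theta}}.$$

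For $\theta \in [\tfrac{1}{3}, 1]$ I would substitute the rate of $\mu$ from Theorem \ref{coro: mu} into this inequality. When $\theta = 1$, $\mu(\xb_k)$ vanishes in finitely many steps, which forces $\nabla f(\xb_k) = \zero$ and $\lambda_{\min}(\nabla^2 f(\xb_k)) \ge 0$, so that $\xb_k \in \Omega$. For $\theta \in (\tfrac{1}{3}, 1)$, writing $r = \tfrac{2\theta}{1-\theta} > 1$ and raising $\mu(\xb_k) \le \Theta(\exp(-r^{k-k_0}))$ to the $r$-th power gives $\dist_\Omega(\xb_k) \le \Theta(\exp(-r^{k-k_0+1}))$, which is absorbed into the claimed $\Theta(\exp(-r^{k-k_0}))$ bound since $r > 1$ and $r^{k-k_0+1} \ge r^{k-k_0}$. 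For $\theta = \tfrac{1}{3}$, the exponent $\tfrac{2\theta}{1-\theta}$ equals $1$, so the linear bound on $\mu$ transfers directly.

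For $\theta \in (0, \tfrac{1}{3})$, the exponent $\tfrac{2\theta}{1-\theta}$ is strictly less than $1$, and substituting the sub-linear rate $\mu(\xb_k) \le \Theta((k-k_0)^{-\frac{2\theta}{1-3\theta}})$ into the \KL-error bound only yields $\Theta(k^{-\frac{4\theta^2}{(1-3\theta)(1-\theta)}})$, which is strictly slower than the target rate. In this regime I would instead use the trivial majorization $\dist_\Omega(\xb_k) \le \|\xb_k - \bar\xb\|$ and invoke Theorem \ref{thm: converge_ite} directly, which supplies the claimed $\Theta((k-k_0)^{-\frac{2\theta}{1-3\theta}})$ bound.

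The main obstacle is recognizing that neither available bound is tight over the full range of $\theta$: the \KL-error bound composed with Theorem \ref{coro: mu} is tight precisely when $\tfrac{2\theta}{1-\theta} \ge 1$, while the trajectory majorization via Theorem \ref{thm: converge_ite} supplies the stated rate only when $\theta \le \tfrac{1}{3}$. The cutoff at $\theta = \tfrac{1}{3}$ is exactly where $\tfrac{2\theta}{1-\theta} = 1$, so the case split is natural once this transition is identified. Within each case the argument reduces to substitution of the rates already proved in Theorems \ref{coro: mu} and \ref{thm: converge_ite}.
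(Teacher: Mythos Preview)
Your argument has a circular dependency. You invoke Theorem~\ref{coro: mu} to control $\dist_\Omega(\xb_k)$ via $\mu(\xb_k)$, but in the paper's logical order Theorem~\ref{coro: mu} is proved \emph{after} Proposition~\ref{thm: dist} and in fact \emph{from} it: the appendix shows $\mu(\xb_{k}) \le \|\xb_{k+1}-\xb_k\| \le C\,\dist_\Omega(\xb_k)$ (the second inequality is \cite[Lemma~1]{Yue2018}) and then quotes Proposition~\ref{thm: dist}. The appendix even opens with the remark that the proof of Theorem~\ref{coro: mu} is postponed because it relies on the other results. So your Cases~1--3, which all route through Theorem~\ref{coro: mu}, are not self-contained.

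The fix is short. For Cases~1, 3 and 4 the paper simply uses the majorization $\dist_\Omega(\xb_k)\le\|\xb_k-\bar\xb\|$ and reads off the rates from Theorem~\ref{thm: converge_ite}; you already do this for Case~4, and the same works verbatim for $\theta=1$ and $\theta=\tfrac13$. The only case that needs a separate argument is $\theta\in(\tfrac13,1)$, because the super-linear exponent $\tfrac{2\theta}{1-\theta}$ claimed here is strictly larger than the exponent $\tfrac{2}{3(1-\theta)}$ supplied by Theorem~\ref{thm: converge_ite}. For this case the paper combines the \KL error bound with the CR step bound to get
\[
\dist_\Omega(\xb_{k+1}) \le C\,\|\nabla f(\xb_{k+1})\|^{\frac{\theta}{1-\theta}} \le C\,\|\xb_{k+1}-\xb_k\|^{\frac{2\theta}{1-\theta}},
\]
and then closes the recursion directly with \cite[Lemma~1]{Yue2018}, $\|\xb_{k+1}-\xb_k\|\le C\,\dist_\Omega(\xb_k)$, yielding $\dist_\Omega(\xb_{k+1})\le C\,\dist_\Omega(\xb_k)^{2\theta/(1-\theta)}$. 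This self-contained recursion gives the stated super-linear rate without touching $\mu$ or Theorem~\ref{coro: mu}. Your inequality $\dist_\Omega(\xb_k)\le\kappa'\mu(\xb_k)^{2\theta/(1-\theta)}$ is correct, but the missing ingredient is an independent rate for $\mu$, and the Yue--Zhou--So lemma is exactly what lets you bypass that.
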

We note that \Cref{thm: dist} characterizes the convergence rate of the point-to-set distance $\dist_\Omega(\xb_{k})$, which is different from the convergence rate of the point-to-point distance $\|\xb_{k} - \bar{\xb}\|$ established in \Cref{thm: converge_ite}. Also, the convergence rate results in \Cref{thm: dist} generalizes the quadratic convergence result in \cite{Yue2018} that corresponds to the case $\theta = \frac{1}{2}$.

\section{Conclusion}
In this paper, we explore the asymptotic convergence rates of the CR algorithm under the \KL property of the nonconvex objective function, and establish the convergence rates of function value gap, iterate distance and second-order stationary gap for CR. Our results show that the convergence behavior of CR ranges from sub-linear convergence to super-linear convergence depending on the parameter of the  underlying  \KL geometry, and the obtained convergence rates are order-wise improved compared to those of first-order algorithms under the \KL property. As a future direction, it is interesting to study the convergence of other computationally efficient variants of the CR algorithm such as stochastic variance-reduced CR under the \KL property in nonconvex optimization.

{
	\bibliographystyle{apalike}
	\bibliography{./ref}
}

\clearpage
\appendix{
	{\centering\Large \textbf{Supplementary Materials}}

The proof of \Cref{coro: mu} is based on the results in other theorems. Thus, we postpone its proof to the end of the supplementary material.

\section*{Proof of \Cref{thm: 2}}

\thmfunc*
\begin{proof}
	We first recall the following fundamental result proved in \cite{Nesterov2006}, which serves as a convenient reference.
	\begin{thm}[Theorem 2, \cite{Nesterov2006}]\label{thm: 1}
		Let \Cref{assum: f} hold. Then, the sequence $\{\xb_{k}\}_k$ generated by CR satisfies 
		\begin{enumerate}[leftmargin=*,topsep=0pt,noitemsep]
			\item The set of limit points $\omega(\xb_{0})$ of $\{\xb_k\}_k$ is nonempty and compact, all of which are second-order stationary points;
			\item The sequence $\{f(\xb_{k}) \}_k$ decreases to a finite limit $\bar{f}$, which is the constant function value evaluated on the set $\omega(\xb_{0})$. 
		\end{enumerate}
	\end{thm}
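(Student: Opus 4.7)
The plan is to prove the two items of the quoted Nesterov theorem using the three inequalities recorded in \Cref{table: 1} together with the regularity properties in \Cref{assum: f}. The whole argument runs in the classical descent-lemma / limit-point style and does not invoke the \KL property.

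First I would use the descent inequality $f(\xb_{k}) - f(\xb_{k-1}) \le -\frac{L}{12}\|\xb_{k}-\xb_{k-1}\|^{3}$ (taking $M=L$; the generic case only changes the constant). Monotonicity of $\{f(\xb_{k})\}_{k}$ combined with $\inf_{\xb}f(\xb)>-\infty$ (\Cref{assum: f}.1) immediately gives a finite limit $\bar f := \lim_{k\to\infty} f(\xb_{k})$, proving half of item 2. Telescoping the descent inequality over $k=1,\ldots,N$ yields
\[
\sum_{k=1}^{N}\|\xb_{k}-\xb_{k-1}\|^{3} \le \tfrac{12}{L}\bigl(f(\xb_{0})-\bar f\bigr),
\]
so the series is summable and in particular $\|\xb_{k+1}-\xb_{k}\|\to 0$. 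Because $f(\xb_{k})\le f(\xb_{0})$ for all $k$, the whole sequence lies in the sub-level set $\{\xb:f(\xb)\le f(\xb_{0})\}$, which is compact by \Cref{assum: f}.2; hence $\{\xb_{k}\}_{k}$ is bounded.

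Next I would establish item 1. Boundedness and standard point-set topology imply that $\omega(\xb_{0})$ is nonempty; it is closed as an intersection of closed sets and contained in the compact sub-level set, hence compact. To show each limit point is second-order stationary, fix a convergent subsequence $\xb_{k_{j}}\to \bar\xb$. The bounds from \Cref{table: 1},
\[
\|\nabla f(\xb_{k_{j}})\| \le L\|\xb_{k_{j}}-\xb_{k_{j}-1}\|^{2},\qquad
-\lambda_{\min}\bigl(\nabla^{2}f(\xb_{k_{j}})\bigr) \le \tfrac{3L}{2}\|\xb_{k_{j}}-\xb_{k_{j}-1}\|,
\]
together with $\|\xb_{k+1}-\xb_{k}\|\to 0$, force the right-hand sides to $0$. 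Continuity of $\nabla f$ and of the least-eigenvalue map applied to the continuous $\nabla^{2} f$ then yield $\nabla f(\bar\xb)=\zero$ and $\lambda_{\min}(\nabla^{2}f(\bar\xb))\ge 0$, i.e.\ $\bar\xb$ is second-order stationary.

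Finally, for the remaining half of item 2, continuity of $f$ combined with $f(\xb_{k})\downarrow \bar f$ gives $f(\bar\xb)=\lim_{j} f(\xb_{k_{j}})=\bar f$ for every limit point, so $f\equiv \bar f$ on $\omega(\xb_{0})$. The only mildly delicate step I anticipate is the second-order part of the limit-point argument: I need the least eigenvalue of a continuous matrix-valued function to be continuous, which does hold (by Weyl's inequality), but it is worth writing out explicitly rather than appealing to continuity of $\nabla^{2} f$ alone. Everything else is routine telescoping and compactness bookkeeping.
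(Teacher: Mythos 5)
Your proposal is correct, but note that the paper does not actually prove this statement: it is imported verbatim as Theorem~2 of \cite{Nesterov2006} and used as a black box, so there is no in-paper proof to compare against. What you have written is essentially the standard Nesterov--Polyak argument, and it goes through: the cubic descent inequality plus lower-boundedness gives the monotone limit $\bar f$ and cubic summability, hence $\|\xb_{k+1}-\xb_k\|\to 0$; compactness of the sub-level set gives boundedness, hence $\omega(\xb_0)$ nonempty, closed, bounded, therefore compact; the gradient and least-eigenvalue bounds from \Cref{table: 1} combined with continuity of $\nabla f$, $\nabla^2 f$, and $\lambda_{\min}(\cdot)$ (the Weyl-inequality point you flag is the right one to write out) give second-order stationarity of every limit point; and continuity of $f$ gives $f\equiv\bar f$ on $\omega(\xb_0)$. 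Two small bookkeeping remarks. First, the inequalities in \Cref{table: 1} are themselves consequences of the $L$-Lipschitz Hessian, which \Cref{assum: f} only guarantees on a compact set $\mathcal{C}$; you should observe up front that the descent property keeps all iterates in the compact sub-level set $\{\xb: f(\xb)\le f(\xb_0)\}$, so that the Table~1 bounds are legitimately available at every iteration (this is the same circularity one breaks by noting that monotone descent holds for the exact minimizer of the cubic model regardless, since the model majorizes $f$ there). Second, your derivation of monotonicity should really start from the fact that $\xb_{k+1}$ minimizes the cubic model, whose value at $\yb=\xb_k$ is $f(\xb_k)$; the quantitative $-\tfrac{M}{12}\|\xb_{k+1}-\xb_k\|^3$ refinement then needs the first-order optimality condition of the subproblem, which is a step worth writing rather than reading off the table. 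Neither point is a gap in the idea, only in the level of detail.
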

	
	From the results of \Cref{thm: 1} we conclude that $\dist_{\omega(\xb_{0})}(\xb_{k}) \to 0$, $f(\xb_{k}) \downarrow \bar{f}$ and $\omega(\xb_{0})$ is a compact set on which the function value is the constant $\bar{f}$. Then, it is clear that for any fixed $\epsilon>0, \lambda>0$ and all $k\ge k_0$ with $k_0$ being sufficiently large, $\xb_k \in \{\xb: \dist_{\omega(\xb_{0})}(\xb)<\varepsilon, \bar{f} < f(\xb) <\bar{f} + \lambda\}$. Hence, all the conditions of the \KL property in \Cref{def: KL} are satisfied, and we can exploit the \KL inequality in \cref{eq: KL}. 
	
	Denote $r_k := f(\xb_{k}) - \bar{f}$. For all $k\ge k_0$ we obtain that 
	\begin{align}
	r_k \overset{(i)}{\le} C \|\nabla f(\xb_{k})\|^{\frac{1}{1-\theta}} \overset{(ii)}{\le} C\|\xb_{k} - \xb_{k-1}\|^{\frac{2}{1-\theta}} \overset{(iii)}{\le} C(r_{k-1} - r_{k})^{\frac{2}{3(1-\theta)}}, \label{eq: supp3}
	\end{align}
	where (i) follows from the \KL property in \cref{eq: KLsimple}, (ii) and (iii) follow from the dynamics of CR in \Cref{table: 1} and we have absorbed all constants into $C$. Define $\delta_k = r_k C^{\frac{3(1-\theta)}{3\theta-1}}$, then the above inequality can be rewritten as
	\begin{align}
	\delta_{k-1} - \delta_{k} \ge \delta_k^{\frac{3(1-\theta)}{2}}, \quad \forall k \ge k_0. \label{eq: supp2}
	\end{align}
	Next, we discuss the convergence rate of $\delta_k$ under different regimes of $\theta$. 
	
	\textbf{Case 1: $\theta = 1$.} 
	
	In this case, the \KL property in \cref{eq: KL} satisfies $\varphi'(t) = c$ and implies that $\|\nabla f(\xb_{k})\| \ge \frac{1}{c}$ for some constant $c>0$. On the other hand, by the dynamics of CR in \Cref{table: 1}, we obtain that 
	\begin{align}
	f(\xb_{k+1}) \le f(\xb_{k}) - \frac{M}{12} \|\xb_{k+1} - \xb_{k} \|^3 \le f(\xb_{k}) - \frac{M}{12} (\frac{2}{L+M})^{\frac{3}{2}}\|\nabla f(\xb_{k})\|^{\frac{3}{2}}.
	\end{align}
	Combining these two facts yields the conclusion that for all $k\ge k_0$
	$$f(\xb_{k+1}) \le f(\xb_{k}) - C$$
	for some constant $C>0$. Then, we conclude that $f(\xb_{k}) \downarrow -\infty$, which contradicts the fact that $f(\xb_{k}) \downarrow \bar{f} > -\infty$ (since $f$ is bounded below). Hence, we must have $f(\xb_{k}) \equiv \bar{f}$ for all sufficiently large $k$. 
	
	\textbf{Case 2: $\theta \in (\frac{1}{3}, 1)$.} 
	
	In this case $0< \frac{3(1-\theta)}{2} <1$. Since $\delta_k \to 0$ as $r_k \to 0$, $\delta_k^{\frac{3(1-\theta)}{2}}$ is order-wise larger than $\delta_k$ for all sufficiently large $k$. Hence, for all sufficiently large $k$, \cref{eq: supp2} reduces to
	\begin{align}
	\delta_{k-1} \ge \delta_k^{\frac{3(1-\theta)}{2}}.
	\end{align}
	It follows that $\delta_k\downarrow 0$ super-linearly as $\delta_k \le \delta_{k-1}^{\frac{2}{3(1-\theta)}}$. Since $\delta_k = r_k C^{\frac{2}{1-3\theta}}$, we conclude that $r_k \downarrow 0$ super-linearly as $r_k \le C_1r_{k-1}^{\frac{2}{3(1-\theta)}}$ for some constant $C_1>0$. By letting $k_0$ be sufficiently large so that $r_{k_0}$ is sufficiently small, we obtain that
	\begin{align}
	r_k \le C_1r_{k-1}^{\frac{2}{3(1-\theta)}} \le C_1^{k-k_0} r_{k_0}^{(\frac{2}{3(1-\theta)})^{k-k_0}} = \Theta \Bigg( \exp \bigg(-\bigg(\frac{2}{3(1-\theta)}\bigg)^{k-k_0}\bigg) \Bigg).
	\end{align}
	
	\textbf{Case 3: $\theta = \frac{1}{3}$.} 
	
	In this case $\frac{3(1-\theta)}{2} = 1$, and \cref{eq: supp3} reduces to $r_k \le C (r_{k-1} - r_k)$, i.e., $r_k \downarrow 0$ linearly as $r_k \le \frac{C}{1+C} r_{k-1}$ for some constant $C>0$. Thus, we obtain that for all $k \ge k_0$
	\begin{align}
	r_k \le \bigg(\frac{C}{1+C}\bigg)^{k-k_0} r_{k_0} = \Theta \Big( \exp \big(-(k-k_0)\big) \Big).
	\end{align}
	
	\textbf{Case 4: $\theta \in (0, \frac{1}{3})$.} 
	
	In this case, $1< \frac{3(1-\theta)}{2} <\frac{3}{2}$ and $-\frac{1}{2} < \frac{3\theta-1}{2} < 0$. Since $\delta_k\downarrow 0$, we conclude that for all $k\ge k_0$
	\begin{align}
	\delta_{k-1}^{-\frac{3(1-\theta)}{2}} < \delta_{k}^{-\frac{3(1-\theta)}{2}}, \quad\delta_{k-1}^{\frac{3\theta-1}{2}} < \delta_{k}^{\frac{3\theta-1}{2}}.
	\end{align}
	Define an auxiliary function $\phi(t):= \frac{2}{1-3\theta} t^{\frac{3\theta-1}{2}}$ so that $\phi'(t) = -t^{\frac{3(\theta-1)}{2}}$. We next consider two cases. First, suppose that $\delta_k^{\frac{3(\theta-1)}{2}} \le 2 \delta_{k-1}^{\frac{3(\theta-1)}{2}}$. Then for all $k\ge k_0$
	\begin{align}
	\phi(\delta_k) - \phi(\delta_{k-1}) &= \int_{\delta_{k-1}}^{\delta_{k}} \phi'(t) dt =  \int_{\delta_{k}}^{\delta_{k-1}} t^{\frac{3(\theta-1)}{2}} dt \ge (\delta_{k-1} - \delta_{k}) \delta_{k-1}^{\frac{3(\theta-1)}{2}} \\
	&\overset{(i)}{\ge} \frac{1}{2}(\delta_{k-1} - \delta_{k}) \delta_{k}^{\frac{3(\theta-1)}{2}} \overset{(ii)}{\ge} \frac{1}{2},
	\end{align}
	where (i) utilizes the assumption and (ii) uses \cref{eq: supp2}. 
	
	Second, suppose that $\delta_k^{\frac{3(\theta-1)}{2}} \ge 2 \delta_{k-1}^{\frac{3(\theta-1)}{2}}$. Then $\delta_{k}^{\frac{3\theta - 1}{2}} \ge 2^{\frac{3\theta - 1}{3(\theta - 1)}} \delta_{k-1}^{\frac{3\theta - 1}{2}}$, which further leads to
	\begin{align}
	\phi(\delta_{k}) - \phi(\delta_{k-1}) &= \frac{2}{1-3\theta} (\delta_{k}^{\frac{3\theta - 1}{2}} - \delta_{k-1}^{\frac{3\theta - 1}{2}}) \ge \frac{2}{1-3\theta} (2^{\frac{3\theta - 1}{3(\theta - 1)}} - 1) \delta_{k-1}^{\frac{3\theta - 1}{2}} \\
	&\ge \frac{2}{1-3\theta} (2^{\frac{3\theta - 1}{3(\theta - 1)}} - 1) \delta_{k_0}^{\frac{3\theta - 1}{2}}.
	\end{align}
	Combining the above two cases and defining $C := \min \{\frac{1}{2}, \frac{2}{1-3\theta} (2^{\frac{3\theta - 1}{3(\theta - 1)}} - 1) \delta_{k_0}^{\frac{3\theta - 1}{2}} \}$, we conclude that for all $k\ge k_0$
	\begin{align}
	\phi(\delta_{k}) - \phi(\delta_{k-1}) \ge C,
	\end{align}
	which further implies that
	\begin{align}
	\phi(\delta_{k}) \ge \sum_{i=k_0 + 1}^{k} \phi(\delta_{i}) - \phi(\delta_{i-1}) \ge C(k-k_0).
	\end{align}
	Substituting the form of $\phi$ into the above inequality and simplifying the expression yields $\delta_{k} \le (\frac{2}{C(1-3\theta)(k-k_0)})^{\frac{2}{1-3\theta}}$. It follows that $r_k \le (\frac{C_3}{k-k_0})^{\frac{2}{1-3\theta}}$ for some $C_3>0$.
	
\end{proof}

\section*{Proof of \Cref{thm: finite length}}
\thmfinitelen*
\begin{proof}
	Recall the definition that $r_k:= f(\xb_k) - \bar{f}$, where $\bar{f}$ is the finite limit of $\{f(\xb_{k}) \}_k$. Also, recall that $k_0\in \mathds{N}$ is a sufficiently large integer. Then, for all $k\ge k_0$, the \KL property implies that
	\begin{align}
	\varphi' (r_k) \ge \frac{1}{\|\nabla f(\xb_k)\|} \ge \frac{2}{(L+M)\|\xb_{k} - \xb_{k-1}\|^2}, \label{eq: supp4}
	\end{align}
	where the last inequality uses the dynamics of CR in \Cref{table: 1}. Note that $\varphi(t) = \frac{c}{\theta} t^{\theta}$ is concave for $\theta \in (0,1]$. Then, by concavity we obtain that 
	\begin{align}
	\varphi(r_k) - \varphi(r_{k+1}) \ge \varphi' (r_k) (r_k - r_{k+1}) \ge \frac{M}{6(L+M)}\frac{\|\xb_{k+1} - \xb_{k}\|^3}{\|\xb_{k} - \xb_{k-1}\|^2}, \label{eq: 7}
	\end{align}
	where the last inequality uses \cref{eq: supp4} and the dynamics of CR in \Cref{table: 1}. 
	Rearranging the above inequality, taking cubic root and summing over $k= k_0,\ldots, n$ yield that (all constants are absorbed in $C$)
	\begin{align}
	\sum_{k=k_0}^{n} \|\xb_{k+1} - \xb_{k}\| &\le C \sum_{k=k_0}^{n} (\varphi(r_k) - \varphi(r_{k+1}))^{\frac{1}{3}} \|\xb_k - \xb_{k-1}\|^{\frac{2}{3}} \\
	&\overset{(i)}{\le} C \left[\sum_{k=k_0}^{n} (\varphi(r_k) - \varphi(r_{k+1}))\right]^{\frac{1}{3}} \left[\sum_{k=k_0}^{n} \|\xb_k - \xb_{k-1}\|\right]^{\frac{2}{3}}  \\
	&\overset{(ii)}{\le} C \left[\varphi(r_{k_0})\right]^{\frac{1}{3}} \left[\sum_{k=k_0}^{n} \|\xb_{k+1} - \xb_{k}\| + \|\xb_{k_0} -\xb_{k_0-1} \|\right]^{\frac{2}{3}}, \label{eq: 6}
	\end{align}
	where (i) applies the H{\"{o}}lder's inequality and (ii) uses the fact that $\varphi \ge 0$. Clearly, we must have $\lim_{n\to \infty} \sum_{k=k_0}^{n} \|\xb_{k+1} - \xb_{k}\| < +\infty$, because otherwise the above inequality cannot hold for all $n$ sufficiently large. We then conclude that $$\sum_{k=k_0}^{\infty} \|\xb_{k+1} - \xb_{k}\| < +\infty,$$
	 and the desired result follows because $k_0$ is a fixed number.
	
\end{proof}

\section*{Proof of \Cref{thm: converge_ite}}
\thmiterate*

\begin{proof}
	We prove the theorem case by case.
	
	\textbf{Case 1: $\theta = 1$.} 
	
	We have shown in case 1 of \Cref{thm: 2} that $f(\xb_k) \downarrow \bar{f}$ within finite number of iterations, i.e., $f(\xb_{k+1}) - f(\xb_k) = 0$ for all $k\ge k_0$. Based on this observation, the dynamics of CR in \Cref{table: 1} further implies that for all $k\ge k_0$
	\begin{align}
		0 = f(\xb_{k+1}) - f(\xb_{k}) \le -\frac{M}{12} \|\xb_{k+1} - \xb_{k}\|^3 \le 0.
	\end{align}
	Hence, we conclude that $\xb_{k+1} = \xb_{k}$ for all $k\ge k_0$, i.e.,	$\xb_k$ converges within finite number of iterations. Since \Cref{thm: finite length} shows that $\xb_k$ converges to some $\bar{\xb}$, the desired conclusion follows.
	
	\textbf{Case 2: $\theta \in (\frac{1}{3}, 1)$.} 
	
	Denote $\Delta_k := \sum_{i=k}^{\infty} \|\xb_{i+1} - \xb_{i} \|$. Note that \Cref{thm: finite length} shows that $\xb_{k} \to \bar{\xb}$. Thus, we have $\|\xb_k - \bar{\xb}\| \le \Delta_k$. Next, we derive the convergence rate of $\Delta_k$.
	
	By \Cref{thm: finite length}, $\lim_{n\to \infty} \sum_{i=k}^{n} \|\xb_{i+1} - \xb_{i}\|$ exists for all $k$. Then, we can let $n \to \infty$ in \cref{eq: 6} and obtain that for all $k \ge k_0$
	\begin{align}
	\Delta_k \le C[\varphi(r_k)]^{\frac{1}{3}} \Delta_{k-1}^{\frac{2}{3}} \le C r_k^{\frac{\theta}{3}} \Delta_{k-1}^{\frac{2}{3}} \overset{(i)}{\le} C(\Delta_{k-1} - \Delta_k)^{\frac{2\theta}{3(1-\theta)}} \Delta_{k-1}^{\frac{2}{3}} \le C\Delta_{k-1}^{\frac{2\theta}{3(1-\theta)} + \frac{2}{3}},
	\end{align}
	where $C$ denotes a universal constant that may vary from line to line, and (i) uses the \KL property and the dynamics of CR, i.e., $r_k \le C \|\nabla f(\xb_k)\|^{\frac{1}{1-\theta}} \le C \|\xb_k - \xb_{k-1}\|^{\frac{2}{1-\theta}}$. Note that in this case we have $\frac{2\theta}{3(1-\theta)} + \frac{2}{3} > 1$, and hence the above inequality implies that $\Delta_k$ converges to zero super-linearly as
	\begin{align}
		\Delta_k \le C^{k-k_0} \Delta_{k_0}^{(\frac{2\theta}{3(1-\theta)} + \frac{2}{3})^{k-k_0}} = \Theta \Bigg( \exp \bigg(-\bigg(\frac{2\theta}{3(1-\theta)} + \frac{2}{3}\bigg)^{k-k_0}\bigg) \Bigg).
	\end{align}
	 Since $\|\xb_k - \bar{\xb}\| \le \Delta_k$, it follows that $\|\xb_k - \bar{\xb}\|$ converges to zero super-linearly as desired.

	\textbf{Cases 3 \& 4.}
	
	We first derive another estimate on $\Delta_k$ that generally holds for both cases 3 and 4, and then separately consider cases 3 and 4, respectively. 
	
	Fix $\gamma \in (0,1)$ and consider $k\ge k_0$. Suppose that $\|\xb_{k+1} - \xb_{k}\| \ge \gamma \|\xb_{k} - \xb_{k-1}\|$, then \cref{eq: 7} can be rewritten as 
	\begin{align}
	\|\xb_{k+1} - \xb_{k}\| \le \frac{C}{\gamma^2} (\varphi(r_k) - \varphi(r_{k+1}))
	\end{align}
	for some constant $C>0$.
	Otherwise, we have $\|\xb_{k+1} - \xb_{k}\| \le \gamma \|\xb_{k} - \xb_{k-1}\|$. Combing these two inequalities yields that
	\begin{align}
	\|\xb_{k+1} - \xb_{k}\| \le \gamma \|\xb_{k} - \xb_{k-1}\| + \frac{C}{\gamma^2} (\varphi(r_k) - \varphi(r_{k+1})).
	\end{align}
	Summing the above inequality over $k = k_0,\ldots, n$ yields that
	\begin{align}
	\sum_{k=k_0}^n \|\xb_{k+1} - \xb_{k}\| &\le \gamma \sum_{k=k_0}^n \|\xb_{k} - \xb_{k-1}\| + \frac{C}{\gamma^2} (\varphi(r_{k_0}) - \varphi(r_{n+1})) \\
	&\le \gamma \left[\sum_{k=k_0}^n \|\xb_{k+1} - \xb_{k}\| + \|\xb_{k_0} - \xb_{k_0-1}\|\right] + \frac{C}{\gamma^2}\varphi(r_{k_0}).
	\end{align}
	Rearranging the above inequality yields that
	\begin{align}
	\sum_{k=k_0}^n \|\xb_{k+1} - \xb_{k}\| \le \frac{\gamma}{1 - \gamma} \|\xb_{k_0} - \xb_{k_0-1}\| + \frac{C}{\gamma^2(1 - \gamma)}\varphi(r_{k_0}).
	\end{align}
	Recall $\Delta_k := \sum_{i=k}^{\infty} \|\xb_{i+1} - \xb_{i} \| < + \infty$. Letting $n\to \infty$ in the above inequality yields that for all sufficiently large $k$
	\begin{align}
	\Delta_{k} &\le \frac{\gamma}{1 - \gamma} (\Delta_{k - 1} - \Delta_{k}) + \frac{C}{\gamma^2(1 - \gamma)\theta}r_{k}^\theta \\
	&\overset{(i)}{\le} \frac{\gamma}{1 - \gamma} (\Delta_{k - 1} - \Delta_{k}) + \frac{C}{\gamma^2(1 - \gamma)\theta}\|\xb_{k} - \xb_{k-1}\|^{\frac{2\theta}{1 - \theta}} \\
	&\le \frac{\gamma}{1 - \gamma} (\Delta_{k - 1} - \Delta_{k}) + \frac{C}{\gamma^2(1 - \gamma)\theta}(\Delta_{k - 1} - \Delta_{k})^{\frac{2\theta}{1 - \theta}}, \label{eq: 8}
	\end{align}
	where (i) uses the \KL property and the dynamics of CR, i.e., $r_k \le C \|\nabla f(\xb_k)\|^{\frac{1}{1-\theta}} \le C \|\xb_k - \xb_{k-1}\|^{\frac{2}{1-\theta}}$. 
	
	\textbf{Case 3:} $\theta = \frac{1}{3}$. In this case, $\frac{2\theta}{1 - \theta} = 1$ and \cref{eq: 8} implies that $\Delta_{k} \le C (\Delta_{k - 1} - \Delta_{k})$ for all sufficiently large $k$, i.e., $\Delta_k$ converges to zero linearly as $\Delta_{k} \le (\frac{C}{1+C})^{k-k_0} \Delta_{k_0}$. The desired result follows since $\|\xb_{k} - \bar{\xb}\| \le \Delta_k$.
	
	\textbf{Case 4:} $\theta \in (0, \frac{1}{3})$. In this case, $0<\frac{2\theta}{1 - \theta} < 1$ and \cref{eq: 8} can be asymptotically rewritten as $\Delta_k \le \frac{C}{\gamma^2(1 - \gamma)\theta}(\Delta_{k - 1} - \Delta_{k})^{\frac{2\theta}{1 - \theta}}$. This further implies that
	\begin{align}
	\Delta_k^{\frac{1-\theta}{2\theta}} \le C (\Delta_{k-1} - \Delta_k)
	\end{align}
	for some constant $C>0$.
	Define $h(t) = t^{-\frac{1-\theta}{2\theta}}$ and fix $\beta > 1$. Suppose first that $h(\Delta_k) \le \beta h(\Delta_{k-1})$. Then the above inequality implies that
	\begin{align}
	1 &\le C \frac{\Delta_{k-1} - \Delta_k}{\Delta_k^{\frac{1-\theta}{2\theta}}} = C(\Delta_{k-1} - \Delta_k) h(\Delta_k) \le C\beta(\Delta_{k-1} - \Delta_k) h(\Delta_{k-1}) \\
	&\le C\beta \int_{\Delta_k}^{\Delta_{k-1}} h(t) dt = C\beta \frac{2\theta}{3\theta -1} (\Delta_{k-1}^{\frac{3\theta-1}{2\theta}} - \Delta_{k}^{\frac{3\theta-1}{2\theta}}).
	\end{align}
	Set $\mu:= \frac{1-3\theta}{2C\beta \theta} > 0, \nu := \frac{3\theta - 1}{2\theta} < 0$. Then the above inequality can be rewritten as
	\begin{align}
	\Delta_k^\nu - \Delta_{k-1}^\nu \ge \mu.
	\end{align}
	Now suppose $h(\Delta_k) > \beta h(\Delta_{k-1})$, which implies that $\Delta_{k} < q\Delta_{k-1}$ with $q = \beta^{-\frac{2\theta}{1-\theta}} \in (0,1)$. Then, we conclude that $\Delta_k^\nu \ge q^\nu \Delta_{k-1}^\nu$ and hence $\Delta_k^\nu - \Delta_{k-1}^\nu \ge (q^\nu - 1) \Delta_{k-1}^\nu$. Since $q^\nu - 1 > 0$ and $\Delta_{k-1}^\nu \to +\infty$, there must exist $\bar{\mu}>0$ such that $(q^\nu - 1) \Delta_{k-1}^\nu \ge \bar{\mu}$ for all sufficiently large $k$. Thus, we conclude that $\Delta_k^\nu - \Delta_{k-1}^\nu \ge \bar{\mu}$. Combining two cases, we obtain that for all sufficiently large $k$,
	\begin{align}
	\Delta_k^\nu - \Delta_{k-1}^\nu \ge \min \{\mu, \bar{\mu}\}.
	\end{align}
	Telescoping the above inequality over $k = k_0, \ldots, k$ yields that 
	\begin{align}
	\Delta_k \le [\Delta_{k_0}^\nu + \min \{\mu, \bar{\mu}\} (k-k_0)]^{\frac{1}{\nu}} \le \bigg(\frac{C}{k-k_0}\bigg)^{\frac{2\theta}{1-3\theta}},
	\end{align}
	where $C$ is a certain positive constant.
	The desired result then follows from the fact that $ \|\xb_{k} - \bar{\xb}\| \le \Delta_k$.
\end{proof}

\section*{Proof of \Cref{prop: 1}}
\propeb*

\begin{proof}
	The proof idea follows from that in \cite{Yue2018}.	
	Consider any $\xb\in \Omega^c \cap \{\xb\in \RR^d : \dist_\Omega(\xb)<\varepsilon, f_\Omega < f(\xb) <f_\Omega + \lambda\}$, and consider the following differential equation
	\begin{align}\label{eq: 1}
	\ub(0) = \xb, \quad \overset{\bigcdot}{\ub}(t) = -\nabla f(\ub(t)), \quad \forall t>0.
	\end{align}
	As $\nabla f$ is continuously differentiable, it is Lipschitz on every compact set. Thus, by the Picard-Lindel\"{o}f theorem \cite[Theorem II.1.1]{Hartman}, there exists $\nu > 0$ such that \cref{eq: 1} has a unique solution $\ub_\xb (t)$ over the interval $[0, \nu]$. Define $\Delta(t):= f(\ub_\xb (t)) - f_\Omega$. 
	Note that $\Delta(t)>0$ for $t\in [0,\nu]$, as otherwise there exists $\hat{t} \in [0, \nu]$ such that $\ub_\xb (\hat{t}) \in \Omega$ and hence $\ub_\xb \equiv \ub_\xb (\hat{t}) \in \Omega$ is the unique solution to \cref{eq: 1}. This contradicts the fact that $\ub(0) \in \Omega^c$. 
	
	Using \cref{eq: 1} and the chain rule, we obtain that for all $t\in [0, \nu]$
	\begin{align}
	\overset{\bigcdot}{\Delta}(t) = \inner{\nabla f(\ub_\xb (t))}{\overset{\bigcdot}{\ub}_\xb(t)} = - \|\nabla f(\ub_\xb (t))\| \|\overset{\bigcdot}{\ub}_\xb(t)\|.
	\end{align} 
	Applying the \KL property in \cref{eq: KLsimple} to the above equation yields that
	\begin{align}\label{eq: 2}
	\overset{\bigcdot}{\Delta}(t) \le - \left( \frac{\Delta(t)}{C}\right)^{1-\theta} \|\overset{\bigcdot}{\ub}_\xb(t)\|,
	\end{align}
	where $C>0$ is a certain universal constant.
	Since $\Delta(t) > 0$, \cref{eq: 2} can be rewritten as
	\begin{align}
	\|\overset{\bigcdot}{\ub}_\xb(t)\| \le -\frac{C^{1-\theta}}{\theta} (\Delta(t)^\theta)'.
	\end{align}
	Based on the above inequality, for any $0\le a<b<\nu$ we obtain that
	\begin{align}
	\|\ub_\xb(b) - \ub_\xb(a)\| &= \|\int_{a}^{b} \overset{\bigcdot}{\ub}_\xb(t) dt\| \le \int_{a}^{b} \|\overset{\bigcdot}{\ub}_\xb(t) \|dt \nonumber\\
	&\le - \int_{a}^{b} \frac{C^{1-\theta}}{\theta} [\Delta(t)^\theta]' dt = \frac{C^{1-\theta}}{\theta} [\Delta(a)^\theta - \Delta(b)^\theta]. \label{eq: 3}
	\end{align}
	In particular, setting $a=0$ in \cref{eq: 3} and noting that $\ub_\xb(0) = \xb$, we further obtain that
	\begin{align}
	\|\ub_\xb(b) - \xb\| \le \frac{C^{1-\theta}}{\theta} (f(\xb) - f_\Omega)^\theta. \label{eq: supp1}
	\end{align}
	
	Next, we show that $\nu = +\infty$. Suppose $\nu < +\infty$, then \cite[Corollary II.3.2]{Hartman} shows that $\|\ub_\xb(t)\| \to +\infty$ as $t \to \nu$. However, 
	\cref{eq: supp1} implies that
	\begin{align*}
	\|\ub_\xb(t)\| \le \|\xb\| + \|\ub_\xb(t) - \xb\| \le \|\xb\| + \frac{C^{1-\theta}}{\theta}(f(\xb) - f_\Omega)^\theta < +\infty,
	\end{align*}
	which leads to a contradiction. Thus, $\nu = +\infty$. 
	
	Since $\overset{\bigcdot}{\Delta}(t) \le 0$, $\Delta(t)$ is non-increasing. Hence, the nonnegative sequence $\{\Delta(t)\}$ has a limit. Then, \cref{eq: 3} further implies that $\{\ub_\xb(t)\}$ is a Cauchy sequence and hence has a limit $\ub_\xb(\infty)$. Suppose $\nabla f(\ub_\xb(\infty)) \ne \zero$. Then we obtain that $\lim_{t \to \infty}\overset{\bigcdot}{\Delta}(t) = - \|\nabla f(\ub_\xb(\infty))\|^2 < 0$, which contradicts the fact that $\lim_{t\to \infty} \Delta(t)$ exists. Thus, $\nabla f(\ub_\xb(\infty)) = \zero$, and this further implies that $\ub_\xb(\infty)\in \Omega, f(\ub_\xb(\infty)) = f_\Omega$ by the \KL property in \cref{eq: KLsimple}. We then conclude that 
	\begin{align}
	\dist_\Omega(\xb) \le \|\xb - \ub_\xb(\infty)\| = \lim_{t \to \infty} \|\xb - \ub_\xb(t)\| \le \frac{C^{1-\theta}}{\theta}(f(\xb) - f_\Omega)^\theta.
	\end{align}
Combining the above inequality with the \KL property in \cref{eq: KLsimple}, we obtain the desired \KL error bound.
\end{proof}

\section*{Proof of \Cref{thm: dist}}
\thmdist*

\begin{proof}
   We prove the theorem case by case.
   
	\textbf{Case 1: $\theta = 1$.} 
	
	We have proved in \Cref{thm: converge_ite} that $\xb_k \to \bar{\xb}\in \Omega$ within finite number of iterations. 
	Since $\dist_\Omega(\xb_k) \le \|\xb_k - \bar{\xb}\|$, we conclude that $\dist_\Omega(\xb_k)$ converges to zero within finite number of iterations. 
	
	\textbf{Case 2: $\theta \in (\frac{1}{3}, 1)$.} 
	
	By the \KL error bound in \Cref{prop: 1}, we obtain that
	\begin{align}
	\dist_\Omega(\xb_{k+1}) \le C \|\nabla f(\xb_{k+1})\|^{\frac{\theta}{1-\theta}} \le C\|\xb_{k+1} - \xb_k\|^{\frac{2\theta}{1-\theta}}, \label{eq: supp5}
	\end{align}
	where the last inequality uses the dynamics of CR in \Cref{table: 1}. On the other hand, \cite[Lemma 1]{Yue2018} shows that 
	\begin{align}
		\|\xb_{k+1} - \xb_k\| \le C \dist_\Omega(\xb_k). \label{eq: supp6}
	\end{align}
  Combining \cref{eq: supp5} and \cref{eq: supp6} yields that
	\begin{align}
	\dist_\Omega(\xb_{k+1}) \le C \dist_\Omega(\xb_{k})^{\frac{2\theta}{1-\theta}}.
	\end{align}
	Note that in this case we have $\frac{2\theta}{1-\theta} > 1$. Thus, $\dist_\Omega(\xb_{k})$ converges to zero super-linearly as desired.
	
	\textbf{Cases 3 \& 4: $\theta \in (0, \frac{1}{3}]$.} 
	
	Note that $\dist_\Omega(\xb_k) \le \|\xb_k - \bar{\xb}\|$. The desired results follow from Cases 3 \& 4 in
	\Cref{thm: converge_ite}.
	
\end{proof}

\section*{Proof of \Cref{coro: mu}}
\thmmu*

\begin{proof}

By the dynamics of CR in \Cref{table: 1}, we obtain that
\begin{align}
\|\nabla f(\xb_{k+1})\| &\le \frac{L+M}{2} \|\xb_{k+1} - \xb_{k}\|^2, \\
-\lambda_{\min} (\nabla^2 f(\xb_{k+1})) &\le \frac{2L + M}{2} \|\xb_{k+1} - \xb_{k}\|.
\end{align}
The above two inequalities imply that $\mu(\xb_{k}) \le \|\xb_{k+1} - \xb_{k}\|$. Also, \cite[Lemma 1]{Yue2018} shows that $\|\xb_{k+1} - \xb_{k}\|\le C \dist_\Omega(\xb_{k})$. Then, the desired convergence result for $\mu(\xb_{k})$ follows from \Cref{thm: dist}.

\end{proof}

\end{document}